\newtheorem{theorem}{Theorem}[section]
\newtheorem{prop}[theorem]{Proposition}
\newtheorem{coro}[theorem]{Corollary}
\newtheorem*{theorem*}{Claim}
\theoremstyle{definition}
\newtheorem{defi}[theorem]{Definition}
\newtheorem{rem}[theorem]{Remark}
\newtheorem{ex}[theorem]{Example}
\newcommand{\norm}[1]{\left\Vert#1\right\Vert}
\newcommand{\pp}[2]{\frac{\partial#1}{\partial#2}}
\newcommand{\T}{\mathbf{T}}
\begin{document}

\title{Euler flows and singular geometric structures}
\author{Robert Cardona}\address{ Robert Cardona,
Laboratory of Geometry and Dynamical Systems, Department of Mathematics, Universitat Polit\`{e}cnica de Catalunya, Barcelona  \it{e-mail: robert.cardona@upc.edu }
 }
\author{Eva Miranda}\address{ Eva Miranda,
Laboratory of Geometry and Dynamical Systems, Departament of Mathematics, EPSEB, Universitat Polit\`{e}cnica de Catalunya BGSMath Barcelona Graduate School of
Mathematics in Barcelona and
\\ IMCCE, CNRS-UMR8028, Observatoire de Paris, PSL University, Sorbonne
Universit\'{e}, 77 Avenue Denfert-Rochereau,
75014 Paris, France
 }

\author{Daniel Peralta-Salas} \address{Daniel Peralta-Salas, Instituto de Ciencias Matem\'aticas-ICMAT, C/ Nicol\'{a}s Cabrera, nº 13-15 Campus de Cantoblanco, Universidad Aut\'{o}noma de Madrid,
28049 Madrid, Spain \it{e-mail: dperalta@icmat.es} }

\thanks{{ {Robert} Cardona is supported by FPI-BGSMath doctoral grant. {Eva} Miranda  is supported by the Catalan Institution for Research and Advanced Studies via an ICREA Academia Prize 2016 and partially supported  by the grants reference number MTM2015-69135-P (MINECO/FEDER) and reference number {2017SGR932} (AGAUR). Daniel Peralta-Salas is supported by the ERC Starting Grant~335079, the MTM grant 2016-76702-P, and partially supported by the ICMAT--Severo Ochoa grant SEV-2015-0554. This material is based upon work supported by the National Science Foundation under Grant No. DMS-1440140 while Eva Miranda was in residence at the Mathematical Sciences Research Institute in Berkeley, California, during the Fall 2018 semester.}}

\begin{abstract} Tichler proved in \cite{T} that a manifold admitting a smooth non vanishing and closed one-form fibers over a circle. More generally a manifold admitting $k$ independent closed one-forms fibers over a torus $\T^k$. In this article we explain a version of this construction for manifolds with boundary using the techniques of  $b$-calculus \cite{Me,GMP}.
We explore new applications of this idea to Fluid Dynamics and more concretely in the study of stationary solutions of the Euler equations. In the study of Euler flows on manifolds, two dichotomic situations appear. For the first one, in which the Bernoulli function is not constant, we provide a new proof of Arnold's structure theorem and describe $b$-symplectic structures on some of the singular sets of the Bernoulli function. When the Bernoulli function is constant, a correspondence between contact structures with singularities \cite{MO} and what we call $b$-Beltrami fields is established, thus mimicking the classical correspondence between Beltrami fields and contact structures (see for instance \cite{EG}). These results provide a new technique to analyze the geometry of steady fluid flows on non-compact manifolds with cylindrical ends.
\end{abstract}

\maketitle
%
%
%

\section{Introduction}

 The existence of closed one-forms on a manifold simplifies the topology of the manifold in a similar way in which the existence of first integrals of a dynamical system simplifies the topology of its invariant sets. This idea dates back to the work of Tichler who proved in 1970 that a compact manifold admitting a nowhere vanishing closed one-form is a fibration over a circle, or more generally, the existence of $k$ independent closed one-forms implies that the manifold is a fibration over a $k$-dimensional torus. In a dual language, the existence of first integrals also adds constraints on the topology of the invariant manifolds, and the classical Arnold-Liouville theorem shows that an integrable system on a symplectic manifold has tori as compact invariant submanifolds (see \cite{CM} for an application of Tichler's ideas to provide a new proof of Arnold-Liouville theorem).

This same order of ideas can be applied to a more general picture in order to consider Fluid Dynamics and, more concretely, steady Euler flows on manifolds. In particular, we give a new proof of Arnold's structure theorem when the Bernoulli function is not constant, which is based on Tischler's theorem for manifolds with boundary. This starting point takes us to consider manifolds with boundary and $b^{2k}$-forms, thus providing a proof of the $b^{2k}$-Tichler theorem. Additionally, we analyze the singular level sets of the Bernoulli function, which are not considered in Arnold's theorem, and prove that under some assumptions they can be described as $b$-symplectic manifolds. When the Bernoulli function is constant, we reconsider the correspondence between Beltrami fields and contact structures and extend it to contact manifolds with cylindrical ends (compactified as $b$-manifolds) thus obtaining a new correspondence between Beltrami fields in this case with the $b$-contact manifolds recently introduced in \cite{MO}. Several questions concerning the Hamiltonian and Reeb dynamics of $b$-contact manifolds, such as the existence of periodic orbits, can be extremely useful to understand some properties of the stream lines of Beltrami flows on manifolds with cylindrical ends.

\textbf{ Organization of this paper:} In Section 2 we introduce $b^m$-forms and study their desingularization. A special focus is given to the study of $b^m$-symplectic and $b$-contact forms. In Section 3 we prove a Tischler theorem for manifolds with boundary using $b^{2k}$-forms. In the smooth case, the result holds under suitable hypotheses and we use it to provide a new proof, in Section 4, of Arnold's structure theorem for steady Euler flows. We analyze some singular level sets of the Bernoulli function in Section 5, in the context where Arnold's theorem holds. Assuming the Bernoulli function is Morse-Bott, we find singular symplectic structures in some of these sets after resolving their topological singularities. Finally, in Section 6 we study steady Euler flows on manifolds with cylindrical ends and provide a correspondence between Beltrami fields on $b$-manifolds and $b$-contact structures.

{\textbf{Acknowledgements:} We are thankful to the referees of this paper for their careful and efficient work, and their interesting observations.}

\section{A crash course on $b^m$-forms and their desingularization}

In this section we follow closely \cite{GMP} and \cite{MO} to introduce singular symplectic and contact structures that will be of utter relevance in the study of Fluid Dynamics on manifolds with boundary.
\subsection{$b$-symplectic manifolds}
The language of $b$-forms was introduced by Melrose \cite{Me} in order to study manifolds with boundary. The subject gained attention in the realm of Poisson geometry as a special class of Poisson manifolds can be studied using $b$-calculus \cite{GMP}. Most definitions can be used replacing the boundary by any given hypersurface of a manifold without boundary:
\begin{defi}
 A $b$-manifold $(M, Z)$ is an oriented manifold $M$  with an oriented hypersurface
$Z$.
\end{defi}

\begin{rem}
  It is possible to extend this definition to consider non-orientable manifolds. See for instance \cite{GL} and \cite{MP}.
\end{rem}
In order to have the $b$-category we introduce the notion of $b$-map.
\begin{defi}
A $b$-map is a map
$$f : (M_1, Z_1) \longrightarrow (M_2, Z_2)$$
so that $f$ is transverse to $Z_2$ and $f^{-1} (Z_2) = Z_1$.
\end{defi}
Vector fields and differential forms have to be redefined also.
\begin{defi}
A $b$-vector field on a $b$-manifold $(M,Z)$ is a vector field which is tangent to $Z$ at every point $p\in Z$.
\end{defi}

{Observe, in particular, that a $b$-vector field is tangent to the hypersurface $Z$, so from a dynamical point of view $Z$ is an \emph{invariant manifold} by the flow of these vector fields.}
These $b$-vector fields form a Lie subalgebra of vector fields on $M$. Let $t$ be a defining function of $Z$ in a neighborhood $U$ and $(t,x_2,...,x_n)$ be a chart on it. Then the set of $b$-vector fields on $U$ is a free $C^\infty(U)$-module with basis
$$\left( t \pp{}{t}, \pp{}{x_2},\ldots, \pp{}{x_n}\right).$$
We deduce that the sheaf of $b$-vector fields on $M$ is a locally free $C^\infty$-module and therefore it is given by the sections of a vector bundle on $M$. This vector bundle is called the \textbf{$b$-tangent bundle} and denoted by $^b TM$. Its dual bundle is called the \textbf{$b$-cotangent bundle} and is denoted $^bT^*M$.

By considering sections of powers of this bundle we obtain \textbf{$b$-forms}.

\begin{defi}
Let $(M^{2n},Z)$ be a $b$-manifold and $\omega\in \,^b \Omega^2(M)$ a closed $b$-form. We say that $\omega$ is $b$-symplectic if $\omega_p$ is of maximal rank as an element of $\Lambda^2(\,^b T_p^* M)$ for all $p\in M$.
\end{defi}

In the class of Poisson manifolds there is the distinguished subclass of $b$-Poisson manifolds which is indeed formed by $b$-symplectic manifolds together with a bi-vector field naturally associated to the $b$-symplectic forms.

\begin{defi}
Let $(M^{2n},\Pi)$ be an oriented Poisson manifold. Let the map
$$p\in M\mapsto(\Pi(p))^n\in \Lambda^{2n}(TM)$$
be transverse to the zero section. Then $\Pi$ is called a $b$-Poisson structure on $M$. The hypersurface $Z$ where the multivectorfield $\Pi^n$ vanishes,
$$Z=\{p\in M|(\Pi(p))^n=0\}$$
 is called the critical hypersurface of $\Pi$. The pair $(M,\Pi)$ is called a $b$-Poisson manifold.
\end{defi}
The transversality condition is equivalent to saying that $0$ is a regular value of the map $p\longrightarrow (\Pi(p))^n$. The hypersurface $Z$ has a defining function obtained by dividing this map by a non-vanishing section of $\bigwedge^{2n}(TM)$.

The set of $b$-symplectic manifolds is  in one-to-one correspondence with the set of $b$-Poisson manifolds. This correspondence, detailed in \cite{GMP}, can be formulated as
\begin{prop}
A two-form $\omega$ on a $b$-manifold $(M,Z)$ is $b$-symplectic if and only if its dual bivector field $\Pi$ is a $b$-Poisson structure.
\end{prop}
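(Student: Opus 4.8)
The plan is to set up the fibrewise duality between $b$-two-forms and $b$-bivector fields and then translate the two non-degeneracy conditions into one another by tracking the anchor map $a\colon{}^bTM\to TM$. A $b$-symplectic form $\omega$, being of maximal rank at every point as a section of $\Lambda^2({}^bT^*M)$, defines a bundle isomorphism $\omega^\flat\colon{}^bTM\to{}^bT^*M$; its inverse determines a non-degenerate section $\widetilde\Pi\in\Lambda^2({}^bTM)$. I would then define the candidate Poisson bivector as the image $\Pi=(\Lambda^2 a)(\widetilde\Pi)\in\Lambda^2(TM)$ under the anchor, and conversely recover $\omega$ from a given $\Pi$ by inverting it on the open dense set $M\setminus Z$ where $a$ is an isomorphism. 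The whole statement then splits into two independent pieces: the equivalence of the two non-degeneracy/transversality conditions, and the equivalence $d\omega=0\Leftrightarrow[\Pi,\Pi]=0$.

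For the first piece I would work in a chart $(t,x_2,\dots,x_{2n})$ adapted to $Z=\{t=0\}$, in which ${}^bT^*M$ and ${}^bTM$ carry the dual frames $(\tfrac{dt}{t},dx_2,\dots,dx_{2n})$ and $(t\partial_t,\partial_{x_2},\dots,\partial_{x_{2n}})$. The key computation is on top powers: if $\widetilde\Pi^{\,n}=g\,(t\partial_t)\wedge\partial_{x_2}\wedge\cdots\wedge\partial_{x_{2n}}$ with $g$ nowhere vanishing, which is exactly the condition that $\omega$ be $b$-symplectic, then applying the anchor, which sends the frame element $t\partial_t$ to the genuine vector field $t\partial_t$, yields
\[
\Pi^n=g\,t\,\partial_t\wedge\partial_{x_2}\wedge\cdots\wedge\partial_{x_{2n}}.
\]
Since $g$ is nowhere zero and $t$ is a defining function for $Z$, the section $\Pi^n$ of $\Lambda^{2n}(TM)$ vanishes exactly along $Z$, and it does so transversally, because its differential along $Z$, computed in the trivialisation $\partial_t\wedge\partial_{x_2}\wedge\cdots\wedge\partial_{x_{2n}}$, equals $g\,dt\neq0$. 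This is precisely the definition of a $b$-Poisson structure, with $Z$ as critical hypersurface. The converse runs the same computation backwards: transversality of $\Pi^n$ forces the inverse of $\Pi$ on $M\setminus Z$ to acquire exactly a simple $1/t$ pole with non-vanishing leading term, so it extends to a non-degenerate element of $\Lambda^2({}^bT^*M)$.

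The second piece is the classical identity relating closedness of a non-degenerate form to the vanishing of the Schouten bracket of its dual bivector. On the dense open set $M\setminus Z$ the anchor is an isomorphism, the $b$-de Rham differential coincides with the ordinary differential, and $\omega$ and $\Pi$ are honestly inverse to each other, so the standard symplectic--Poisson dictionary gives $d\omega=0\Leftrightarrow[\Pi,\Pi]=0$ there. Since $d\omega$ is a smooth $b$-form and $[\Pi,\Pi]$ a smooth multivector, each vanishes on all of $M$ as soon as it vanishes on the dense set $M\setminus Z$, so the equivalence extends across $Z$ by continuity.

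I expect the main obstacle to be the first piece: making rigorous the passage between the intrinsic condition ``maximal rank in $\Lambda^2({}^bT^*_pM)$'' and the extrinsic condition ``$\Pi^n$ transverse to the zero section of $\Lambda^{2n}(TM)$''. The subtlety is that these two conditions live on different bundles, and the bridge between them is the single factor of $t$ introduced by $\Lambda^{2n}a$; one must verify that this is exactly one power of $t$, so that the vanishing of $\Pi^n$ is of first order and hence transverse, and that the leading coefficient inherits its non-vanishing from $g$, both statements being independent of the chosen defining function and chart.
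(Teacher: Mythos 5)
The paper does not actually prove this proposition; it states it as a known correspondence and cites \cite{GMP}, so there is no in-paper argument to compare against. Your proposal is a correct rendition of the standard proof from that reference: the fibrewise inversion $\omega^\flat\colon{}^bTM\to{}^bT^*M$, the observation that $\Lambda^{2n}a$ introduces exactly one factor of the defining function $t$ on top powers (so that non-degeneracy in $\Lambda^{2n}({}^bT_pM)$ translates into transverse vanishing of $\Pi^n$ in $\Lambda^{2n}(T_pM)$, and conversely), and the density argument on $M\setminus Z$ for $d\omega=0\Leftrightarrow[\Pi,\Pi]=0$ all check out. The only place where you are terser than \cite{GMP} is the converse of the non-degeneracy step: as phrased, the proposition starts from a $b$-two-form $\omega$ and its dual, so your frame computation suffices; but if one instead starts from an arbitrary bivector field $\Pi$ on $M$ with $\Pi^n$ transverse to the zero section, one must additionally argue that $\Pi$ lifts to a section of $\Lambda^2({}^bTM)$ (equivalently, that the Hamiltonian vector fields are tangent to $Z$) before inverting — that is the content of the lifting lemma in \cite{GMP}, and it is worth making explicit if you intend the stronger reading.
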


In this context we have a normal form theorem analogous to Darboux theorem for symplectic manifolds. This result is also proved in \cite{GMP}.

\begin{theorem}[{\bf $b$-Darboux theorem}]\label{thm:bdarboux}
Let $(M,Z, \omega)$ be a $b$-symplectic manifold. Then, on a neighborhood of a point $p\in Z$, there exist coordinates $(x_1,y_1,...,x_n,y_n)$  centered at $p$ such that
$$\omega=\frac{1}{x_1}\,dx_1\wedge dy_1 + \sum_{i=2}^{n} dx_i\wedge dy_i.$$
\end{theorem}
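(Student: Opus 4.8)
The plan is to separate the singular part of $\omega$ along $Z$ from a smooth symplectic remainder, to fix the singular part by an explicit choice of coordinates, and then to normalize the remainder with a Moser deformation that turns out to be automatically tangent to $Z$. Fix a defining function $t$ for $Z$ near $p$. The first step is a local structure theorem for closed $b$-two-forms: every such $\omega$ can be written as
$$\omega=\frac{dt}{t}\wedge\alpha+\beta,$$
with $\alpha$ a smooth one-form and $\beta$ a smooth two-form. Since $d(dt/t)=0$, the condition $d\omega=0$ separates into its polar and regular parts and forces $d\alpha=0$ and $d\beta=0$; this is the Mazzeo--Melrose-type decomposition of closed $b$-forms from \cite{GMP}, and verifying it cleanly is the first technical point.

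Next I would extract nondegeneracy information. Expanding $\omega^n=n\,\frac{dt}{t}\wedge\alpha\wedge\beta^{n-1}+\beta^n$ and using that $\omega$ is $b$-symplectic (so $\omega^n$ is a nonvanishing $b$-volume form), the polar coefficient $\alpha\wedge\beta^{n-1}$ must be nonzero on $Z$. This shows that $\alpha$ is a closed, nowhere-vanishing one-form near $p$ and that $\beta$ restricts to a symplectic form on the $(2n-2)$-dimensional distribution $\ker\alpha\cap TZ$, which pins down the dimension count underlying the normal form.

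I would then produce the singular coordinates. By the Poincaré lemma the closed form $\alpha$ equals $dy_1$ for a local function $y_1$, and setting $x_1=t$ turns the polar part into $\frac{1}{x_1}\,dx_1\wedge dy_1$, so $\omega=\frac{1}{x_1}\,dx_1\wedge dy_1+\beta$. After a linear change of the transverse coordinates I may assume that the value of $\beta$ at $p$ agrees, in the $b$-cotangent frame $\{dx_1/x_1,\,dy_1,\,dx_2,\ldots\}$, with that of the model two-form $\beta_0=\sum_{i=2}^{n}dx_i\wedge dy_i$. It then remains to connect $\omega$ to the model $\omega_1=\frac{1}{x_1}\,dx_1\wedge dy_1+\beta_0$.

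The final step is a Moser argument along $\omega_s=\frac{1}{x_1}\,dx_1\wedge dy_1+(1-s)\beta+s\beta_0$. Since the polar part is $s$-independent, $\dot\omega_s=\beta_0-\beta$ is smooth and closed, hence exact: $\beta_0-\beta=d\mu$ near $p$. Solving $\iota_{X_s}\omega_s=-\mu$ and integrating yields a flow $\phi_s$ with $\phi_s^*\omega_s=\omega$; applying $\phi_1$ to the coordinates gives the stated normal form, because $\phi_1^*(dx_1/x_1)=d\tilde x_1/\tilde x_1$ and $\phi_1$ preserves $Z$. The main obstacle is twofold: checking that the family $\omega_s$ stays $b$-nondegenerate near $p$ (handled by shrinking the neighborhood once the values agree at $p$), and justifying that $X_s$ is a genuine $b$-vector field. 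The pleasant point that makes the whole scheme work is that tangency of $X_s$ to $Z$ is forced rather than imposed: were $X_s$ to have a nonzero $\partial_{x_1}$-component on $Z$, the contraction $\iota_{X_s}\big(\frac{1}{x_1}dx_1\wedge dy_1\big)$ would produce a $1/x_1$ pole, contradicting the smoothness of $-\mu$; hence $\phi_s$ preserves $Z$ and fixes the polar term, exactly as required.
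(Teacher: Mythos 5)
Your argument is correct in outline, but note that the paper itself does not prove this statement: it quotes the $b$-Darboux theorem from \cite{GMP}, where the proof given runs through the dual $b$-Poisson bivector, Weinstein's splitting theorem, and the classification of Poisson structures on surfaces vanishing transversally. Your route -- Laurent/Mazzeo--Melrose decomposition of the closed $b$-form plus a Moser path -- is the other standard proof and is a legitimate alternative; it has the advantage of staying entirely on the form side and of generalizing directly to $b^m$-forms via the Laurent series recalled in Section~2 of the paper. Two points need tightening. First, for an arbitrary splitting $\omega=\frac{dt}{t}\wedge\alpha+\beta$ the equation $d\omega=0$ does \emph{not} force $d\alpha=0$ and $d\beta=0$: replacing $\alpha$ by $\alpha+t\sigma$ and $\beta$ by $\beta-dt\wedge\sigma$ gives another valid splitting with $\alpha$ non-closed. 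You must first normalize the decomposition, e.g.\ replace $\alpha$ by $\pi^*(i_Z^*\alpha)$ for a tubular-neighborhood projection $\pi$ (absorbing the smooth discrepancy into $\beta$); only then does closedness separate. Second, when you match $\beta_p$ with $\beta_0$ at the point, remember that in the frame $\{dx_1/x_1,dy_1,dx_2,\dots\}$ of $\Lambda^2({}^bT^*_pM)$ the form $\beta_p$ may carry $dy_1\wedge(\cdot)$ cross-terms (the $dx_1$-terms die at $p$ since $dx_1=x_1\cdot\frac{dx_1}{x_1}$, but the $dy_1$-terms do not); killing them is linear symplectic algebra completing the fixed covector $dx_1/x_1$, and should be said explicitly. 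With these repairs the Moser step goes through as you describe, and your closing observation -- that tangency of $X_s$ to $Z$ is forced because a transverse component would create a $\frac{1}{x_1}\,dy_1$ pole in $\iota_{X_s}\omega_s$ incompatible with the smoothness of the primitive $\mu$ -- is exactly the right mechanism and is what makes the flow preserve the singular term.
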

Note that with this chart, the symplectic foliation of $(M,\Pi)$ has a specific form. It has two open subsets where the Poisson structure has maximal rank given by $\{x_1>0 \}$ and $\{ x_1<0 \}$. The hyperplane $\{x_1=0\}$ contains leaves of dimension $2n-2$ given by the level sets of $y_1$.

One of the research directions has been to generalize $b$-structures and consider more
degenerate singularities of the Poisson structure. This is the case of $b^m$-Poisson structures, for which $\omega^n$ has a singularity of $A_n$-type in Arnold's list of simple singularities \cite{A2} \cite{A3}. A dual approach is also possible and interesting, working with forms instead of bivector fields.

\begin{defi} A symplectic $b^m$-manifold is a pair $(M^{2n}
, Z)$ with a closed $b^m$-two form $\omega$ which
has maximal rank at every $p\in M$.
\end{defi}
Such as in the $b$-symplectic case,  an analogous $b^m$-Darboux theorem holds. A decomposition for these forms is given in \cite{S}.

\begin{defi} A Laurent Series of a closed $b^m$-form $\omega$ is a decomposition of $\omega$ in a tubular
neighborhood $U$ of $Z$ of the form
	$$ \omega= \frac{dx}{x^m}\wedge ( \sum_{i=0}^{m-1} \pi^*(\hat{\alpha}_i)x^i)+\beta, $$
where $\pi:U\rightarrow Z$ is the projection, where each $\hat{\alpha}_i$ is a closed form on $Z$, and $\beta$ is form on U.
\end{defi}
It is proved in \cite{S} that every closed $b^m$-form admits in a tubular neighborhood $U$ of $Z$ a Laurent form of this type, when fixing a semi-local defining function.

\subsection{$b$-contact manifolds}

Following these ideas and in analogy with contact structures, $b$-contact structures are developed in \cite{MO}.
\begin{defi}
Let $(M,Z)$ be a (2n+1)-dimensional $b$-manifold. A $b$-contact structure is the distribution given by the kernel of a one $b$-form $\xi=\ker \alpha \subset {^b}TM$, $\alpha \in {^b\Omega^1(M)}$, that satisfies $\alpha \wedge (d\alpha)^n \neq 0$ as a section of $\Lambda^{2n+1}(^bT^*M)$. We say that $\alpha$ is a $b$-contact form and the pair $(M,\xi)$ a $b$-contact manifold.
\end{defi}
As in contact geometry one can define the Reeb vector field that satisfies
$$\begin{cases}
i_{R_\alpha}d\alpha=0 \\ \alpha(R_\alpha)=1.
\end{cases}$$
A Darboux type theorem can be proved, providing a normal local form for these structures.
\begin{theorem}\label{bcDarb}
	Let $\alpha$ be a $b$-contact form inducing a $b$-contact structure $\xi$ on a $b$-manifold $(M,Z)$ of dimension $(2n+1)$ and $p\in Z$. We can find a local chart $(\mathcal{U},z,x_1,y_1,\dots,x_n,y_n)$ centered at $p$ such that on $\mathcal{U}$ the hypersurface $Z$ is locally defined by $z=0$ and
	
	\begin{enumerate}
		\item if $R_p \neq 0$
		\begin{enumerate}
			\item  $\xi_p$ is singular, then $$\alpha|_\mathcal{U}=dx_1 +y_1\frac{dz}{z}+ \sum_{i=2}^n x_i dy_i,$$
			\item  $\xi_p$ is regular, then $$\alpha|_\mathcal{U}=dx_1 +y_1\frac{dz}{z}+\frac{dz}{z}+ \sum_{i=2}^n x_i dy_i,$$
		\end{enumerate}
		\item if  $R_p=0$, then $\tilde{\alpha}=f\alpha$ for $f(p)\neq 0$, where $$\tilde{\alpha}_p=\frac{dz}{z} +  \sum_{i=1}^n x_i dy_i.$$
	\end{enumerate}
\end{theorem}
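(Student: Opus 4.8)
The plan is to extract both dichotomies of the statement from the Reeb vector field $R_\alpha$ and then, in each case, to reduce to the $b$-symplectic normal form of Theorem \ref{thm:bdarboux}. The key preliminary observation is that, since $\alpha(R_\alpha)=1$ is constant and $i_{R_\alpha}d\alpha=0$, the Reeb flow preserves the form: $L_{R_\alpha}\alpha=i_{R_\alpha}d\alpha+d(i_{R_\alpha}\alpha)=0$. As $R_\alpha$ is a $b$-vector field it is tangent to $Z$, so its image under the anchor $\rho\colon {}^{b}TM\to TM$ is tangent to $Z$ and may or may not vanish at $p$; this is exactly the first dichotomy. For the second, fix a defining function $t$ of $Z$, so that $\ker\rho|_Z$ is spanned by $t\,\partial_t$, and set $a:=\alpha(t\,\partial_t)$, the coefficient of the singular part of $\alpha$. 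Since $t\,\partial_t\in\xi_p=\ker\alpha_p$ if and only if $a(p)=0$, the condition $a(p)=0$ is precisely what it means for $\xi_p$ to be singular.

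\emph{Case $R_p\neq0$.} Here $\rho(R_\alpha)_p$ is a nonzero vector tangent to $Z$, so by the flow-box theorem there are coordinates with $R_\alpha=\partial_{x_1}$; I would take the transversal $\Sigma=\{x_1=0\}$ through $p$. Tangency of $R_\alpha$ to $Z$ forces $T_pZ+T_p\Sigma=T_pM$, so $(\Sigma,Z\cap\Sigma)$ is a $2n$-dimensional $b$-manifold. Invariance $L_{R_\alpha}\alpha=0$ together with $i_{R_\alpha}\alpha=1$ shows that $\gamma:=\alpha-dx_1$ has no $dx_1$-component and is $x_1$-independent, hence descends to a $b$-one-form on $\Sigma$; the $b$-contact condition then collapses to $dx_1\wedge(d\gamma)^n\neq0$, so $d\gamma$ is $b$-symplectic on $\Sigma$. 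Applying Theorem \ref{thm:bdarboux} to $d\gamma$ gives coordinates with $d\gamma=dy_1\wedge\tfrac{dz}{z}+\sum_{i=2}^n dx_i\wedge dy_i$, and a $b$-primitive is $y_1\tfrac{dz}{z}+\sum_{i=2}^n x_i\,dy_i$; thus $\gamma$ equals this primitive up to a closed $b$-one-form, which by the $b$-Poincar\'e lemma has the shape $c\,\tfrac{dz}{z}+dh$. The exact part is absorbed by $x_1\mapsto x_1+h$ (preserving $R_\alpha=\partial_{x_1}$), leaving the residue $c$ as the only invariant: $c=0$ is equivalent to $a(p)=0$, hence to $\xi_p$ singular, and yields the model in item (1)(a); $c\neq0$ corresponds to $\xi_p$ regular and is normalized to $c=1$ by $y_1\mapsto y_1/c$, $z\mapsto z^{c}$, yielding item (1)(b).

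\emph{Case $R_p=0$.} Now $\rho(R_\alpha)$ vanishes at $p$, so $R_\alpha$ points in the $b$-radial direction $t\,\partial_t$ there; since $\alpha(R_\alpha)=1$ this is incompatible with $t\,\partial_t\in\ker\alpha_p$, and therefore $a(p)\neq0$. I would rescale by $f:=1/a$ (with $f(p)\neq0$), so that $\tilde\alpha:=f\alpha=\tfrac{dt}{t}+\gamma$ with $\gamma$ smooth and $d\tilde\alpha=d\gamma$. The $b$-contact condition becomes $\tfrac{dt}{t}\wedge(d\gamma)^n\neq0$, so $d\gamma$ restricts to a symplectic form on the level sets of $t$; in particular $d(\gamma|_Z)$ is symplectic on $Z$. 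A smooth Darboux chart on $Z$ brings $\gamma|_Z$ to $\sum_{i=1}^n x_i\,dy_i+dh$, and absorbing $dh$ into the defining function through $z:=t\,e^{h}$ (so that $\tfrac{dz}{z}=\tfrac{dt}{t}+dh$) produces $\tilde\alpha=\tfrac{dz}{z}+\sum_{i=1}^n x_i\,dy_i$ along $Z$; a Moser deformation in the $b$-contact category, through $b$-vector fields tangent to $Z$, then upgrades this to the stated normal form on a neighborhood $\mathcal U$.

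I expect the main obstacle to be the bookkeeping that matches the geometric dichotomies to the analytic ones: checking that the transversal carries a genuine $b$-symplectic (not merely smooth) structure, and, above all, controlling the closed $b$-one-form ambiguity of the primitive $\gamma$ so that its residue $c$ really records whether $\xi_p$ is singular or regular and can be scaled to $1$ while the chart stays centered at $p$ (with an orientation subtlety when $c<0$). In the case $R_p=0$ the delicate step is instead the extension off $Z$: the Moser argument must be run with the singular term $\tfrac{dz}{z}$ present, solving the homological equation by $b$-vector fields so that $Z$ is preserved along the whole isotopy.
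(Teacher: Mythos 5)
This theorem is not proved in the paper you were given: it is imported verbatim from \cite{MO}, so there is no in-paper argument to compare against. Your strategy --- splitting according to $\rho(R_\alpha)_p$ and the value $a(p)=\alpha_p(z\partial_z)$, writing $\alpha=dx_1+\gamma$ in a flow box for the Reeb field, recognising $d\gamma$ as a $b$-symplectic form on a transversal, invoking Theorem~\ref{thm:bdarboux} together with the $b$-Poincar\'e lemma to control the primitive, and treating the case $R_p=0$ by rescaling and a Darboux chart on $Z$ --- is essentially the argument of \cite{MO}, and most of the individual steps are sound (the identification of ``$\xi_p$ singular'' with $a(p)=0$, the descent of $\gamma$ to $\Sigma$, the decomposition of the closed correction as $c\,\tfrac{dz}{z}+dh$, and the absorption of $dh$ into $x_1$ are all correct).

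The genuine gap is your normalisation of the residue $c$ to $1$ in case (1)(b). The substitution $z\mapsto z^{c}$ is not a smooth change of coordinates (it is not even well defined on $z<0$ for general $c$, and is not a diffeomorphism at $z=0$), and no legitimate substitution can repair this: for $p\in Z$ the element $z\partial_z\in{}^bT_pM$ is canonical (independent of the defining function, and preserved by any $b$-diffeomorphism fixing $p$), so $c=\alpha_p(z\partial_z)$ is an honest invariant of the \emph{form} $\alpha$ at $p$. Consequently the most you can reach by coordinate changes alone is $dx_1+y_1\tfrac{dz}{z}+c\,\tfrac{dz}{z}+\sum_{i\geq 2}x_i\,dy_i$; to land on the stated model with coefficient $1$ you must additionally rescale $\alpha$ by the constant $1/c$, a freedom which changes the contact form (not the structure) and which the statement grants explicitly only in case (2). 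You should either perform that constant rescaling openly or keep $c\neq 0$ as a parameter in the normal form. Two smaller points: in case (2) the statement only asserts the expression of $\tilde\alpha$ \emph{at} $p$ (note the subscript $\tilde\alpha_p$), so your pointwise computation along $Z$ already suffices and the Moser deformation you invoke --- which you only assert, without exhibiting the interpolation, the homological equation, or the tangency of the resulting time-dependent vector field to $Z$ --- is doing more work than is claimed; and in that same case the contact condition gives $\tfrac{dt}{t}\wedge(d\gamma)^n\neq 0$ only along $Z$ (away from $Z$ the smooth term $\gamma\wedge(d\gamma)^n$ need not vanish), which is enough for your purposes but should be said precisely.
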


\begin{rem}
There is also a dual correspondence between $b$-contact structures and other structures that play the role of Poisson in the contact context: Jacobi manifolds. The particular subclass is the one of $b$-Jacobi manifolds that satisfy also a transversality condition. For more details you may consult \cite{MO}.
\end{rem}

\subsection{Desingularizing $b^m$-forms}
In \cite{Deblog}  a desingularization procedure for $b^m$-symplectic manifolds associates a family of folded symplectic or symplectic forms to a given $b^m$-symplectic structure depending on the parity of $m$. Namely,

\begin{theorem}[\textbf{Guillemin-Miranda-Weitsman}, \cite{Deblog}]\label{thm:deblogging}
 {Let $\omega$ be a $b^m$-symplectic structure  on a compact orientable manifold $M$  and let $Z$ be its critical hypersurface.
\begin{itemize}
\item If $m=2k$, then there exists  a family of symplectic forms ${\omega_{\epsilon}}$ which coincide with  the $b^{m}$-symplectic form
    $\omega$ outside an $\epsilon$-neighborhood of $Z$ and for which  the family of bivector fields $(\omega_{\epsilon})^{-1}$ converges in
    the $C^{2k-1}$-topology to the Poisson structure $\omega^{-1}$ as $\epsilon\to 0$ .
\item If $m=2k+1$, then  there exists  a family of folded symplectic forms ${\omega_{\epsilon}}$ which coincide with  the $b^{m}$-symplectic form
    $\omega$ outside an $\epsilon$-neighborhood of $Z$.
\end{itemize}}

\end{theorem}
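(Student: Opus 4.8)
The plan is to localize everything to a collar of $Z$ via the Laurent decomposition and then desingularize each singular differential $\frac{dx}{x^{j}}$ separately, replacing it outside an $\epsilon$-collar by the differential of a suitably scaled smooth function, while keeping $\omega$ unchanged away from $Z$ (where a $b^m$-symplectic form is already an honest symplectic form). Concretely, I would fix a semilocal defining function $x$ and a tubular neighborhood $U\cong Z\times(-\delta,\delta)$ so that, by the Laurent form,
$$\omega=\sum_{i=0}^{m-1}\frac{dx}{x^{m-i}}\wedge\pi^*\hat{\alpha}_i+\beta,$$
with each $\hat{\alpha}_i$ closed on $Z$ and $\beta$ closed on $U$. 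For each $1\le j\le m$ I choose $g_j\in C^\infty(\bbR)$ with $g_j(u)=u^{-j}$ for $|u|\ge1$, and set $f'_{j,\epsilon}(x):=\epsilon^{-j}g_j(x/\epsilon)$, so that $f'_{j,\epsilon}(x)=x^{-j}$ for $|x|\ge\epsilon$. The desingularization $\omega_\epsilon$ is then defined to equal $\omega$ off the $\epsilon$-collar and to equal $\sum_i f'_{m-i,\epsilon}(x)\,dx\wedge\pi^*\hat{\alpha}_i+\beta$ inside it.

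Since $f'_{j,\epsilon}(x)=x^{-j}$ for $|x|\ge\epsilon$, the inner and outer expressions agree on $\{\epsilon\le|x|<\delta\}$, so $\omega_\epsilon$ is a globally well-defined smooth two-form coinciding with $\omega$ outside an $\epsilon$-neighborhood of $Z$. Closedness is immediate: $d\bigl(f'_{j,\epsilon}(x)\,dx\wedge\pi^*\hat{\alpha}_i\bigr)=-f'_{j,\epsilon}(x)\,dx\wedge\pi^*d\hat{\alpha}_i=0$ because each $\hat{\alpha}_i$ is closed, while $d\beta=0$ follows from $d\omega=0$ together with the fact that the singular terms are already closed as $b^m$-forms. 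Hence $d\omega_\epsilon=0$.

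The heart of the argument, and the place where the parity of $m$ enters, is the pointwise rank of $\omega_\epsilon$ on the collar. Among the replaced terms the leading one ($j=m$) scales like $\epsilon^{-m}$ and dominates the others (which scale like $\epsilon^{-(m-1)},\dots,\epsilon^{-1}$), so for small $\epsilon$ the sign of $\omega_\epsilon^n$ is governed by $f'_{m,\epsilon}$. Expanding $\omega_\epsilon^n=n\,f'_{m,\epsilon}(x)\,dx\wedge\pi^*\hat{\alpha}_0\wedge\beta^{n-1}+(\text{terms of lower order in }\epsilon^{-1})$ and using that the $b^m$-nondegeneracy of $\omega$ forces $dx\wedge\hat{\alpha}_0\wedge\beta^{n-1}$ to be a nonvanishing $2n$-form along $Z$, everything reduces to the behaviour of $f'_{m,\epsilon}$, i.e. to the boundary values $g_m(\pm1)=(\pm1)^{-m}$. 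If $m=2k$ these values have the same sign, so $g_m$ (hence $f'_{m,\epsilon}$) can be taken strictly positive; then $\omega_\epsilon^n\neq0$ throughout and $\omega_\epsilon$ is symplectic. If $m=2k+1$ the values $g_m(1)=1$ and $g_m(-1)=-1$ have opposite signs, so $g_m$ must vanish; arranging a single transversal zero at $u=0$ makes $f'_{m,\epsilon}$ vanish transversally along $Z$, whence $\omega_\epsilon^n$ changes sign transversally there and $\omega_\epsilon$ is folded symplectic with fold $Z$. Verifying that the leading term really dominates uniformly on the collar, and that in the odd case the fold condition (transversality together with maximal rank of the restriction) genuinely holds, is the step I expect to be the main obstacle.

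Finally, in the even case $m=2k$ I would obtain the $C^{2k-1}$-convergence of the Poisson bivectors by the same scaling. The dual bivector $\omega_\epsilon^{-1}$ depends near $Z$ on $\phi_\epsilon(x):=1/f'_{m,\epsilon}(x)$, which equals $x^{2k}$ for $|x|\ge\epsilon$ and satisfies $\phi_\epsilon(x)=\epsilon^{2k}\psi(x/\epsilon)$ on the collar for a fixed smooth model $\psi$. Differentiating, $\phi_\epsilon^{(\ell)}(x)=\epsilon^{2k-\ell}\psi^{(\ell)}(x/\epsilon)=O(\epsilon^{2k-\ell})$, so for every $\ell\le2k-1$ one has $\phi_\epsilon^{(\ell)}\to(x^{2k})^{(\ell)}$ uniformly, giving $\omega_\epsilon^{-1}\to\omega^{-1}$ in the $C^{2k-1}$-topology; the order-$2k$ derivative is only $O(1)$, which is precisely why the convergence cannot be upgraded to $C^{2k}$ and pins down the exponent $2k-1$ in the statement.
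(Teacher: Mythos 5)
Your proposal is correct and follows essentially the same route as the paper (which itself only sketches the construction from \cite{Deblog}): take the Laurent decomposition near $Z$, replace the singular differential by the differential of a rescaled model function $f_\epsilon(x)=\epsilon^{-(m-1)}f(x/\epsilon)$ that agrees with the antiderivative of $x^{-m}$ outside $[-\epsilon,\epsilon]$, read off symplectic versus folded from the parity of $m$, and extract the $C^{2k-1}$ rate from the scaling of the derivatives. The only (harmless) difference is that you desingularize each Laurent term $\frac{dx}{x^{m-i}}\wedge\pi^*\hat{\alpha}_i$ separately, whereas the paper keeps the factor $\sum_i\pi^*\hat{\alpha}_i\,x^i$ intact and replaces the single differential $\frac{dx}{x^m}$ by $df_\epsilon$, which makes closedness and the leading-term domination slightly cleaner.
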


This desingularization can be applied to any $b^m$-form as detailed in \cite{CM2}.

Let us describe how the desingularization works in the even and odd case.

\textbf{Case I: even $m$ .}

Assume $m=2k$ and let $f\in \mathcal{C}^{\infty}(\mathbb R)$ be an odd smooth function such that $f'(x)>0$ for all $x \in [-1,1]$ as shown below,

 and satisfying
\[f(x)=\begin{cases} \frac{-1}{(2k-1) x^{2k-1}}-2 &\textrm{for} \quad x<-1  \\ \frac{-1}{(2k-1) x^{2k-1}}+2 &\textrm{for} \quad x>1
\end{cases}\]
\noindent outside the interval $[-1,1]$.
\begin{figure}
\centering
\includegraphics[scale=.6]{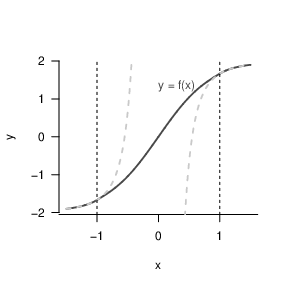}
\label{fig:digraph}
\end{figure}
Scaling the function consider the function
 \begin{equation*}\label{definingequation}f_\epsilon(x):=  \frac{1}{\epsilon^{2k-1}}
f \left(\frac{x}{\epsilon}\right).
\end{equation*}

And outside the interval,
\[f_\epsilon(x)=\begin{cases} \frac{-1}{(2k-1) x^{2k-1}}-\frac{2}{\epsilon^{2k-1}} &\textrm{for} \quad x<-\epsilon  \\ \frac{-1}{
(2k-1)x^{2k-1}}+\frac{2}{\epsilon^{2k-1}}&\textrm{for} \quad x>\epsilon
\end{cases}\]
 Replacing $\frac{dx}{x^{2k}}$ by $df_\epsilon$  in the semi-local expression on $U$ we obtain
 $$  \omega_\epsilon = df_\epsilon \wedge \alpha + \beta. $$
We call this form an $f_\epsilon$-desingularization of $\omega$.

\vspace{3mm}
\textbf{Case II: odd $m$.}
\vspace{3mm}

Consider $m=2k+1$, and consider a function $f\in C^\infty(\mathbb{R})$ satisfying
\begin{itemize}

\item $f(x)=f(-x)$
\item  $f'(x)>0$ if $x>0$
\item $f(x)=x^2-2$ if $x\in [-1,1]$
\item $f(x)=\log(\vert x \vert)$ if $k=0$, $x\in \mathbb R\setminus[-2,2]$
\item $f(x)=-\frac{1}{(2k+2)x^{2k+2}}$ if $k>0$, $x\in \mathbb R\setminus[-2,2]$.
\end{itemize}
\begin{center}
\includegraphics[scale=0.5]{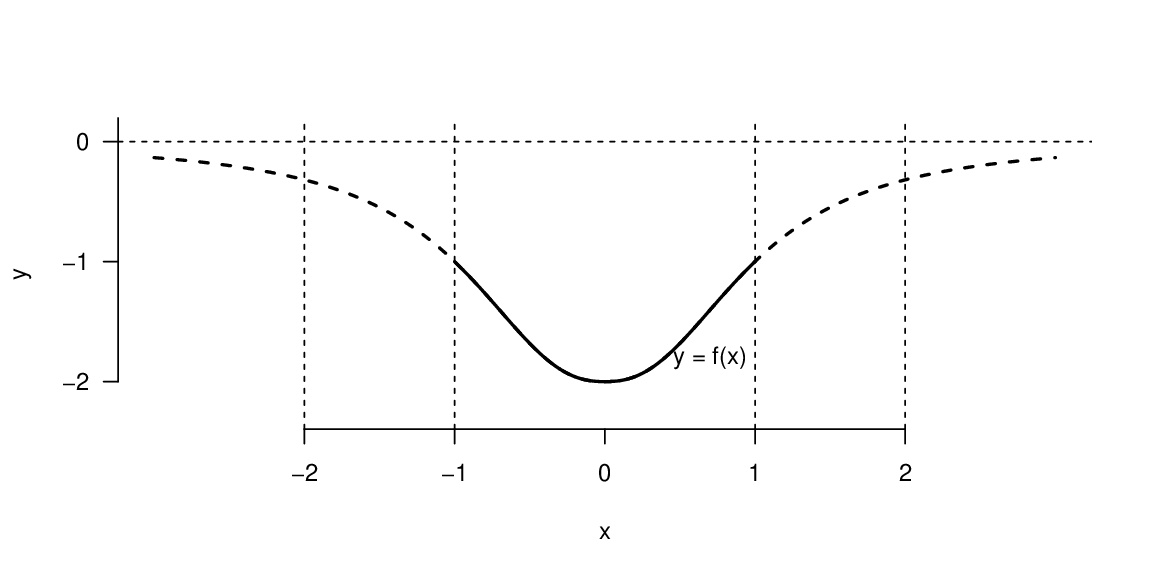}
\end{center}
Taking $\epsilon$  the width of a tubular neighborhood of $Z$ define
\begin{equation*}
f_\epsilon(x):=  \frac{1}{\epsilon^{2k}} f\left(\frac{x}{\epsilon}\right)
\end{equation*}
and consider the form $$ \omega_\epsilon = df_\epsilon \wedge \alpha + \beta. $$
The $f_\epsilon$-desingularization is again smooth and  $df_\epsilon$ vanishes transversally at $Z$.

When $\omega$ is closed, its Laurent decomposition can be used as done in \cite{Deblog} to conclude that $\omega_\epsilon$ is also closed.

\section{A Tischler theorem for manifolds with boundary}

Let us recall Tischler theorem \cite{T} as presented in \cite{CM}.
\begin{theorem}\label{thm:tic}
Let $M^n$ be a closed  manifold endowed with $r$ linearly independent closed $1$-forms $\beta_i, i=1,\dots, r$ which are nowhere vanishing. Then
 $M^n$ fibers over a torus $\T^r$.
\end{theorem}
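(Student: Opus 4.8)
The plan is to realize $M^n$ as the total space of a fiber bundle by producing a smooth submersion $F\colon M^n\to\T^r$ and then invoking Ehresmann's theorem. The hypothesis that the $\beta_i$ are (pointwise) linearly independent means that at each $p\in M$ the covectors $\beta_1(p),\dots,\beta_r(p)$ span an $r$-dimensional subspace of $T_p^*M$; in particular every $\beta_i$ is nowhere vanishing. If the $\beta_i$ had integral periods, each would be $f_i^*(d\theta)$ for a map $f_i\colon M\to S^1=\bbR/\bbZ$ defined by integration, $f_i(p)=\int_{p_0}^p\beta_i \bmod \bbZ$, and the product $F=(f_1,\dots,f_r)$ would be the desired submersion. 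The forms $\beta_i$ need not have integral, or even rational, periods, so the essential step is an approximation argument replacing them by nearby closed forms with integral periods while preserving pointwise linear independence.

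To carry this out, fix an auxiliary Riemannian metric on the closed manifold $M$ and use Hodge theory to write each $\beta_i=h_i+dg_i$, where $h_i$ is the harmonic representative of $[\beta_i]$ and $g_i\in C^\infty(M)$. Because $b_1(M)=\dim H^1_{\mathrm{dR}}(M;\bbR)$ is finite, de Rham's theorem identifies $H^1_{\mathrm{dR}}(M;\bbR)$ with $\mathrm{Hom}(H_1(M;\bbZ),\bbR)$ via periods, and the classes with rational periods---the image of $H^1(M;\mathbb{Q})$---form a dense $\mathbb{Q}$-subspace. Since the space of harmonic $1$-forms is finite dimensional, all norms on it are equivalent, so I can choose harmonic forms $h_i'$ with rational periods that are $C^0$-close to the $h_i$. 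Setting $\beta_i'=h_i'+dg_i$ yields closed forms with rational periods that are $C^0$-close to $\beta_i$. As pointwise linear independence of a finite family of continuous $1$-forms is an open condition on the compact manifold $M$, for a sufficiently fine approximation the $\beta_i'$ remain pointwise linearly independent; rescaling each $\beta_i'$ by a suitable positive integer to clear denominators makes its periods integral and preserves this independence.

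Finally, each integral-period form $\beta_i'$ integrates to a smooth map $f_i\colon M\to S^1$ with $f_i^*(d\theta)=\beta_i'$, and $F=(f_1,\dots,f_r)\colon M\to\T^r$ satisfies $F^*(d\theta_1,\dots,d\theta_r)=(\beta_1',\dots,\beta_r')$. Since these pullbacks are pointwise linearly independent, $dF_p$ has rank $r$ at every $p$, so $F$ is a submersion; as $M$ is compact, $F$ is proper, and Ehresmann's fibration theorem shows that $F$ is a locally trivial fiber bundle over $\T^r$. The main obstacle is precisely the approximation step: one must perturb the given forms---which are neither harmonic nor of rational period---within the closed forms, controlling both their de Rham classes (to reach rational periods) and their pointwise values (to preserve linear independence), which is exactly what the Hodge decomposition together with the density of $H^1(M;\mathbb{Q})$ provides.
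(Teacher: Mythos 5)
Your proof is correct and is precisely the classical Tischler argument that the paper invokes without reproducing (it states Theorem~\ref{thm:tic} with a citation to \cite{T} and \cite{CM} only): perturb the $\beta_i$ within their cohomology classes to closed forms with rational, hence after rescaling integral, periods, using that pointwise independence is a $C^0$-open condition on the compact $M$, then integrate to obtain a submersion $F\colon M\to\T^r$ and apply Ehresmann. No gaps; the Hodge-theoretic packaging of the approximation step is a standard and valid way to carry it out.
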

As a remark in Tischler's original paper, the theorem also holds for compact manifolds with boundary with an extra assumption.
\begin{theorem}\label{thm:ticbound}
Let $M^n$ be a compact connected  manifold with boundary endowed with $r$ linearly independent closed $1$-forms $\beta_i, i=1,\dots, r$ which are nowhere vanishing and satisfy these conditions when restricted to the boundary. Then
 $M^n$ fibers over a torus $\T^r$.
\end{theorem}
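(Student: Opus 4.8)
The plan is to mimic the proof of the closed case (Theorem~\ref{thm:tic}), the only genuinely new ingredient being a boundary version of Ehresmann's fibration theorem; the role of the extra hypothesis on $\partial M$ is precisely to guarantee that the fibering map restricts to a submersion on the boundary. First I would replace the forms $\beta_i$ by cohomologous forms with rational periods. Fix closed $1$-forms $\theta_1,\dots,\theta_b$ representing a basis of $H^1(M;\bbR)$ and write $\beta_i=\sum_j a_{ij}\theta_j+dg_i$ for suitable constants $a_{ij}$ and functions $g_i$. Replacing each $a_{ij}$ by a rational number $a'_{ij}$ sufficiently close to it yields forms $\beta'_i=\sum_j a'_{ij}\theta_j+dg_i$ that are $C^\infty$-close to $\beta_i$, since only the finitely many fixed forms $\theta_j$ are involved. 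After multiplying by a common denominator we may assume that each $\beta'_i$ has integer periods.

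Because $M$ is compact, being nowhere vanishing and being pointwise linearly independent are open conditions in the $C^0$ (hence $C^\infty$) topology, both on $M$ and on the compact hypersurface $\partial M$. Thus, for the perturbation above small enough, the forms $\beta'_i$ remain nowhere vanishing and linearly independent on all of $M$, and their pullbacks $\iota^*\beta'_i$ under the inclusion $\iota:\partial M\hookrightarrow M$ remain nowhere vanishing and linearly independent on $\partial M$; this is exactly where I use the hypothesis that the $\beta_i$ satisfy the conditions when restricted to the boundary. Each closed form $\beta'_i$ with integer periods integrates to a circle-valued map $f_i:M\to\bbR/\bbZ=S^1$, $f_i(x)=\int_{x_0}^x\beta'_i\pmod{\bbZ}$, with $df_i=\beta'_i$. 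Assembling these gives a smooth map $F=(f_1,\dots,f_r):M\to\T^r$ whose differential is $(\beta'_1,\dots,\beta'_r)$.

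The linear independence of the $\beta'_i$ at every point of $M$ says that $F$ is a submersion, and the linear independence of the $\iota^*\beta'_i$ on $\partial M$ says that the restriction $F|_{\partial M}:\partial M\to\T^r$ is again a submersion. Since $M$ is compact, $F$ is proper, so I would conclude by invoking the version of Ehresmann's theorem for manifolds with boundary: a proper submersion from a manifold with boundary to a boundaryless manifold whose restriction to the boundary is also a submersion is a locally trivial fibration, with fibers compact manifolds with boundary. This exhibits $M^n$ as a fiber bundle over $\T^r$, as claimed.

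The main obstacle, and really the only point beyond the closed case, is the application of Ehresmann's theorem in the presence of a boundary: one must check that the phrase ``$\beta_i$ satisfy these conditions when restricted to the boundary'' is precisely the submersion-on-the-boundary condition that makes the boundary version applicable, and that the perturbation to rational periods can be carried out while preserving independence and nonvanishing simultaneously on $M$ and on $\partial M$. Both are handled by the openness of these conditions on the compact sets $M$ and $\partial M$.
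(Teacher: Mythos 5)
The paper does not actually prove Theorem~\ref{thm:ticbound}: it is quoted as a remark from Tischler's original paper, so there is no in-text argument to compare against. Your proof is the standard one and is essentially correct: perturb to integer periods, integrate to a map $F\colon M\to\T^r$, observe that the two independence hypotheses say exactly that $F$ and $F|_{\partial M}$ are submersions, and invoke the boundary version of Ehresmann's theorem (lift the coordinate vector fields on the base to vector fields on $M$ tangent to $\partial M$, which is possible precisely because $F|_{\partial M}$ is a submersion, and flow). One small imprecision: for the rational approximation step to produce forms with rational periods you must take the $\theta_1,\dots,\theta_b$ to represent a basis of the lattice $\mathrm{im}\bigl(H^1(M;\bbZ)\to H^1(M;\bbR)\bigr)$, i.e.\ forms with integer periods, not an arbitrary real basis of $H^1(M;\bbR)$; otherwise $\sum_j a'_{ij}\theta_j$ with $a'_{ij}\in\mathbb{Q}$ need not have rational periods and ``multiplying by a common denominator'' has nothing to act on. With that choice (which exists since $H^1(M;\bbZ)\otimes\bbR\cong H^1(M;\bbR)$ for compact $M$), the argument goes through, and your identification of the boundary hypothesis with the submersion-on-the-boundary condition needed for Ehresmann is exactly the right reading of the statement.
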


Using the language of $b^{2k}$-forms and the deblogging procedure, one can state a Tischler theorem for manifolds with boundary. This theorem gives more information than the one we would get by simply applying the classical Tichler theorem restricted to the boundary.
\begin{defi}
Let $M$ be a manifold with boundary. Its double $\bar M$ is obtained by taking two copies of $M$ and gluing along their boundary.
$$ \bar M = M \times \{0,1\} / \sim , $$
where $(x,0) \sim (x,1)$ for all $x \in \partial M$.
\end{defi}

\begin{theorem}\label{thm:btic}
Let $\alpha_1,...,\alpha_r$ be closed one $b^{2k}$-forms in a $b^{2k}$-manifold $M$ such that $\alpha_1 \wedge \dots \wedge \alpha_r \neq 0$ everywhere in $M$. If the pullback of the  forms to the boundary  are also independent  then $M$ fibers over $\T^r$. Otherwise the double $\bar M$ fibers  over $\T^r$ and the glued boundary fibers over $\T^{r-1}$.
\end{theorem}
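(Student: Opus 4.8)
The plan is to reduce the statement to the two smooth Tischler theorems (Theorems \ref{thm:tic} and \ref{thm:ticbound}) by desingularizing the $b^{2k}$-forms. First I would fix a semi-local defining function $x$ for $Z=\partial M$ and write the Laurent decomposition of each closed form in a tubular neighborhood of $Z$,
$$\alpha_j=\frac{dx}{x^{2k}}\Big(\sum_{i=0}^{2k-1}c_{ji}\,x^{i}\Big)+\beta_j,$$
where the $c_{ji}$ are locally constant on $Z$ (closed $0$-forms) and $\beta_j$ is a smooth $1$-form. Following Case I of the desingularization procedure (Theorem \ref{thm:deblogging}, applied to arbitrary $b^{2k}$-forms as in \cite{CM2}), I replace $\frac{dx}{x^{2k}}$ by $df_\epsilon$, where $f_\epsilon$ is odd with $f_\epsilon'>0$, obtaining smooth $1$-forms $\alpha_j^\epsilon=df_\epsilon\big(\sum_i c_{ji}x^i\big)+\beta_j$ that agree with $\alpha_j$ outside an $\epsilon$-neighborhood of $Z$. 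Since the $c_{ji}$ are locally constant, $d\alpha_j^\epsilon=d\beta_j=d\alpha_j=0$, so the $\alpha_j^\epsilon$ are closed.

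The crucial point is that pointwise independence survives desingularization. Outside the tubular neighborhood this is immediate because $\alpha_j^\epsilon=\alpha_j$. At a point $p\in Z$ one has
$$\alpha_j^\epsilon(p)=c_{j0}\,f_\epsilon'(0)\,dx_p+(\beta_j|_Z)(p),\qquad \alpha_j(p)=c_{j0}\Big(\tfrac{dx}{x^{2k}}\Big)_p+(\beta_j|_Z)(p),$$
where the ordinary covector $dx$ vanishes in ${}^bT_p^*M$. Since $f_\epsilon'(0)>0$, the map sending $\big(\tfrac{dx}{x^{2k}}\big)_p$ to $f_\epsilon'(0)\,dx_p$ is a linear isomorphism of the conormal lines fixing the tangential parts, and it carries $\alpha_j(p)$ to $\alpha_j^\epsilon(p)$. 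Hence the $b$-independence of the $\alpha_j$ along $Z$ is equivalent to the ordinary independence of the $\alpha_j^\epsilon$ there, and $\alpha_1^\epsilon\wedge\cdots\wedge\alpha_r^\epsilon\neq0$ everywhere on $M$.

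Now I would split according to the boundary behaviour, using that the pullback of $\alpha_j$ to $Z$ along the inclusion $\iota:Z\hookrightarrow M$ equals $\iota^*\alpha_j^\epsilon=\beta_j|_Z$ (since $\iota^*dx=0$). If these pullbacks are independent, the hypotheses of the boundary Tischler theorem (Theorem \ref{thm:ticbound}) hold for the $\alpha_j^\epsilon$, and $M$ fibers over $\T^r$. If they are dependent, I first note that the restriction ${}^bT_p^*M\to T_p^*Z$ has one-dimensional kernel spanned by $\frac{dx}{x^{2k}}$; as the $b$-forms have rank $r$, the rank of the pullbacks can drop by at most one and is therefore exactly $r-1$, so Theorem \ref{thm:tic} applied to the closed manifold $Z$ produces a fibration $Z\to\T^{r-1}$. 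To obtain the fibration of the double I work directly on $\bar M$: since $2k$ is even, $\frac{dx}{x^{2k}}$ is defined on all of $\bar M$, and the reflection symmetry of the doubling lets the $\alpha_j$ extend to closed $b^{2k}$-forms on the closed manifold $\bar M$ that remain independent; desingularizing there (Theorem \ref{thm:deblogging}) and invoking Theorem \ref{thm:tic} gives $\bar M\to\T^r$.

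I expect the main obstacle to be the extension of the $b^{2k}$-forms across $Z$ in the dependent case: one must verify that the reflected forms glue to genuinely closed $b^{2k}$-forms on $\bar M$ which stay pointwise independent, and this is exactly where the even parity of $2k$ enters, making $\frac{dx}{x^{2k}}$ extend consistently across $Z$. By contrast, the preservation of independence along $Z$ and the exact rank drop to $r-1$ are routine once the $b$-cotangent bookkeeping above is set up.
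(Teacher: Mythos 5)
Your overall strategy --- Laurent decomposition, desingularization by replacing $\frac{dx}{x^{2k}}$ with $df_\epsilon$, reduction to Theorems \ref{thm:tic} and \ref{thm:ticbound}, and extension to the double by reflection symmetry --- is the same as the paper's, and your treatment of the first case and of the fibration $\bar M\to\T^r$ matches the paper's argument.

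The genuine gap is in your derivation of the fibration $Z\to\T^{r-1}$. You argue that the rank of the pullbacks $\iota^*\alpha_1,\dots,\iota^*\alpha_r$ ``can drop by at most one and is therefore exactly $r-1$,'' and then invoke Theorem \ref{thm:tic}. Two things go wrong. First, the hypothesis of the second case only says that the pullbacks fail to be independent at \emph{some} point; the one-dimensional-kernel argument gives rank at least $r-1$ everywhere, but nothing prevents the rank from being $r$ elsewhere, so ``exactly $r-1$'' is not justified as a global statement. Second, and more importantly, Theorem \ref{thm:tic} requires as input $r-1$ specific closed one-forms whose wedge is nowhere zero on $Z$; a family of $r$ closed forms of pointwise rank $r-1$ does not directly supply these, because the $(r-1)$-element subfamily that is independent may vary from point to point. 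The paper closes exactly this gap with a step you omit: since the leading Laurent coefficients $\alpha_j^0$ are closed $0$-forms on $Z$, hence constants, one may assume $\alpha_1^0\neq 0$ and replace $\alpha_j$ by $\alpha_j-\frac{\alpha_j^0}{\alpha_1^0}\alpha_1$ for $j\geq 2$. These constant-coefficient combinations are still closed, still independent together with $\alpha_1$, and have vanishing conormal component along $Z$, so their pullbacks give $r-1$ everywhere-independent closed one-forms on $Z$ to which Theorem \ref{thm:tic} applies. The same normalization is what makes the preservation of independence under desingularization a transparent change of basis (only $\alpha_{1,\epsilon}$ carries a $df_\epsilon$ term along $Z$); your pointwise ``change of conormal basis'' additionally ignores the possible $dx$-component of $\beta_j$, which is harmless only after choosing $\epsilon$ small enough that $f_\epsilon'(0)$ dominates --- this is the role of the paper's ``suitable $\epsilon$.''
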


\begin{proof}

If the forms  are also independent when pullbacked to the boundary, we can apply the desingularization that we will detail for the second case in the manifold with boundary and apply Theorem \ref{thm:ticbound}.

Otherwise at least one of the forms has a singular part and one considers the extension of the forms $\alpha_i$ into $\bar M$ by symmetry. In this way we obtain a $b^{2k}$-manifold $\bar M$ with critical hypersurface $Z$ where the boundaries have been glued. We can proceed to desingularize  the $1$-forms following \cite{Deblog}. Namely, the forms are closed and  admit Laurent series in a neighborhood $U$ of $Z$,
$$ \alpha_i= (\sum_{j=0}^{2k-1} \alpha^j_i t^j) \frac{dt}{t^{2k}} + \beta_i, $$
for $t$ a positively oriented defining function. Here each $\alpha^j_i$ is a constant function and $\beta_i$ is smooth in $Z$. The term $\alpha_i^0$ is constant and the only non vanishing term of the singular part at the hypersurface $Z$. The rest of terms $\alpha_i^j$ for $j\neq 0$ are paired with powers of $t$ that vanish at $Z$. The dividing term of $\frac{dt}{t^{2k}}$ does not cancel the powers of $t$ because of the structure of the $b^{2k}$-cotangent bundle: one has to think of $\frac{dt}{t^{2k}}$ as if it was a $d\tilde t$ for a coordinate $\tilde t$.

Since at least one of these $\alpha_i^0$ is non vanishing, we can assume $\alpha_1^0 \neq 0$. Redefining
$$\alpha_i:= \alpha_i - \tfrac{\alpha_i^0}{\alpha_1^0}\alpha_1, \text{ for } i=2,...,n$$
we can assume that only the first form has a singular part at the hypersurface and independence of the forms still holds.
Proceeding to the desingularization, one can take a suitable $\epsilon$ and the desingularized forms
$$ \alpha_{i,\epsilon} = df_\epsilon \wedge (\sum_{j=0}^k \alpha^j_i  t^j)+ \beta_i. $$
Since we have $\alpha_1 \wedge ... \wedge \alpha_r \neq 0$,$df_{\epsilon} \neq 0$ and at least one singular form (for instance the first one $\alpha_1^0 \neq 0$) we deduce that  $\alpha_{1,\epsilon}\wedge  ... \wedge \alpha_{r,\epsilon} \neq 0$ using elementary linear algebra as  $\alpha_{i,\epsilon}$  and $\alpha_i$  determine the same matrix of coefficients. One has simply changed the form $\frac{dt}{dt^{2k}}$ of the basis by $df_\epsilon$. Applying Theorem \ref{thm:tic} we deduce that $\bar M$ fibers over $\T^r$. Observe that in $Z$ the form $\alpha_1$ was the only one with a non vanishing singular term. Hence its the only one with a non vanishing term for $df_\epsilon$: we deduce that $\alpha_{2,\epsilon},...,\alpha_{r,\epsilon}$ are independent when restricted to  $Z$ again by linear algebra. In particular, $Z$ fibers over $\T^{r-1}$.

\end{proof}

{\begin{rem} The parity (evenness) of $m$ comes from the desingularization procedure. The desingularized form obtained from a non-vanishing $b^m$-form is non-vanishing only when $m$ is even. For odd $m$, as explained in Section~$2.3$, the resulting form has a zero. This zero cannot be eliminated because the singular part of the form changes sign when crossing the hypersurface. This is why the conditions of the second statement of Theorem~$3.4$ cannot be met for odd $m$. However, the first part can be obtained by adding a constant to the desingularization formula to prevent the desingularized form from vanishing at the boundary.  \end{rem} }

\begin{ex} \normalfont
An easy example to consider is the compact cylinder $C$ visualized as a subset of the torus $\T^2$ (as quotient of the plane $\T^2 \cong (\mathbb{R}/\mathbb{Z})^2$). Consider the $b^{{2k}}$-forms $\frac{1}{\sin (2\pi x)^{2k}} dx$ and $dy$ on $\mathbb{R}^2$. The critical set is the boundary of a compact cylinder. The forms descend to the quotient, and in the compact cylinder they satisfy the hypotheses of the theorem.

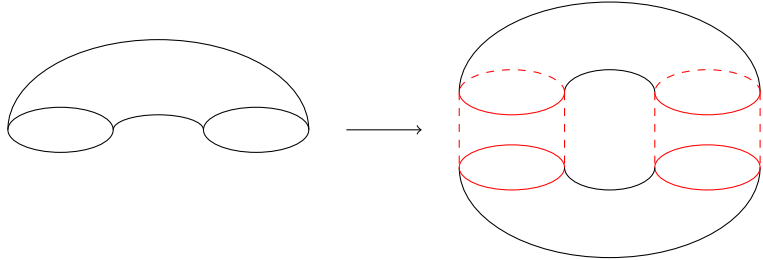
\begin{figure}[!h]

\begin{center}
\begin{tikzpicture}[scale=1]

\draw (0,0,0) arc (0:180:2cm and 1.2cm);
\draw (-0.7,0,0) ellipse (0.7 and 0.3);
\draw (-3.3,0,0) ellipse (0.7 and 0.3);
\draw (-1.4,0,0) arc (0:180:0.6 cm and 0.2cm);

\draw [->] (0.5,0,0) -- (1.5,0,0);

\draw (6,0.5,0) arc (0:180:2cm and 1.2cm);
\draw [color=red](6,0.5,0) arc (0:-180:0.7 and 0.3);
\draw [color=red](3.4,0.5,0) arc (0:-180:0.7 and 0.3);
\draw (4.6,0.5,0) arc (0:180:0.6 cm and 0.3cm);
\draw [dashed,color=red](6,0.5,0) arc (0:180:0.7 and 0.3);
\draw [dashed,color=red](3.4,0.5,0) arc (0:180:0.7 and 0.3);

\draw [dashed, color=red] (6,0.5,0) -- (6,-0.5,0);
\draw [dashed, color=red] (3.4,0.5,0) -- (3.4,-0.5,0);
\draw [dashed, color=red] (4.6,0.5,0) -- (4.6,-0.5,0);
\draw [dashed, color=red] (2,0.5,0) -- (2,-0.5,0);

\draw (6,-0.5,0) arc (0:-180:2cm and 1.2cm);
\draw [color=red](5.3,-0.5,0) ellipse (0.7 and 0.3);
\draw [color=red](2.7,-0.5,0) ellipse (0.7 and 0.3);
\draw (4.6,-0.5,0) arc (0:-180:0.6 cm and 0.3cm);

\end{tikzpicture}
\end{center}
\caption{The double of a compact cylinder}
\end{figure}
\end{ex}

\begin{rem}\label{rem:trans} \normalfont
The second statement can also be applied {for honest De Rham forms} with the following  changes. Instead of one of the forms having a singular part, we ask one of the forms to be transversal to the boundary everywhere. Secondly we need that the forms can be extended to the doubling of the manifold by symmetry which might not be true in general.
\end{rem}

As an easy corollary we obtain,

\begin{coro}
 An $n$-dimensional manifold admitting $n$ independent and closed $b^{2k}$-forms is a compact cylinder $\T^{n-1} \times [0,1]$.
\end{coro}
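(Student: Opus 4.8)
The plan is to feed the hypotheses into Theorem~\ref{thm:btic} and then identify the resulting spaces by elementary covering-space arguments. Throughout I take $M$ to be compact and connected, as is needed for Tischler's theorem to apply. The first observation is that the two alternatives of Theorem~\ref{thm:btic} are not symmetric in this situation: the boundary (equivalently the critical hypersurface) has dimension $n-1$, so the pullbacks of the $n$ forms $\alpha_1,\dots,\alpha_n$ to it are $n$ one-forms on an $(n-1)$-dimensional manifold and therefore cannot be independent at any point. Thus the first alternative is impossible and we fall automatically into the ``Otherwise'' case: the double $\bar M$ fibers over $\T^n$ and the glued hypersurface $Z$ fibers over $\T^{n-1}$.

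Next I would pin down the diffeomorphism type of these spaces. A fibration of the compact $n$-manifold $\bar M$ over $\T^n$ has $0$-dimensional fibers, hence is a finite covering map; since every finite connected covering of a torus is again a torus, $\bar M\cong\T^n$. The same reasoning applied to $Z\to\T^{n-1}$ shows that each connected component of the $(n-1)$-dimensional manifold $Z$ is diffeomorphic to $\T^{n-1}$.

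The heart of the argument, and the step I expect to require the most care, is recovering the product structure of $M$ from the global geometry of $\bar M\cong\T^n$. Here I would work with the desingularized forms $\alpha_{1,\epsilon},\dots,\alpha_{n,\epsilon}$, which are smooth, closed and satisfy $\alpha_{1,\epsilon}\wedge\cdots\wedge\alpha_{n,\epsilon}\neq 0$ everywhere. In particular the $n-1$ smooth closed forms $\alpha_{2,\epsilon},\dots,\alpha_{n,\epsilon}$ are independent at every point of $\bar M$ (otherwise the full wedge would vanish there), so by Theorem~\ref{thm:tic} they define a fibration $\Phi\colon\bar M\cong\T^n\to\T^{n-1}$ with circle fibers, whose restriction to $Z$ is exactly the covering $Z\to\T^{n-1}$ from the previous step. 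The doubling involution $\sigma$ fixes $Z$ pointwise, interchanges the two copies of the interior $M^\circ$, and commutes with $\Phi$ because the forms $\alpha_{i,\epsilon}$ for $i\geq 2$ are $\sigma$-invariant; hence $\sigma$ preserves each fiber circle. On a fiber, $\sigma$ cannot act freely (a free fiber would lie in a single copy of $M^\circ$ yet be mapped to the other copy), so it fixes the nonempty set $Z\cap S^1$ and interchanges the two arcs coming from the two copies of $M^\circ$; such an involution of $S^1$ is a reflection with exactly two fixed points. Therefore $Z$ meets each fiber in precisely two points. Since $\bar M\setminus Z$ is the disjoint union of two copies of $M^\circ$ and so has exactly two components, these two points cannot be interchanged by monodromy, which forces $Z=\T^{n-1}\sqcup\T^{n-1}$ and presents each copy of $M^\circ$ as an orientable interval bundle over $\T^{n-1}$. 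Such a bundle is trivial, so $M^\circ\cong\T^{n-1}\times(0,1)$, and compactifying across the two boundary tori gives $M\cong\T^{n-1}\times[0,1]$, as claimed.
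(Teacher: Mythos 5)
The paper itself offers no argument for this corollary beyond calling it an easy consequence of Theorem~\ref{thm:btic}, so your write-up supplies considerably more than the source does. Your opening moves are correct and are surely what the authors intend: the $n$ pullbacks cannot be pointwise independent on the $(n-1)$-dimensional boundary, so only the ``otherwise'' alternative of Theorem~\ref{thm:btic} can occur; a fibration of a compact $n$-manifold over $\T^n$ has zero-dimensional fibres and is therefore a finite covering, whence $\bar M\cong\T^n$ and every component of $Z$ is a $\T^{n-1}$. The step you correctly identify as the heart of the matter --- recovering the product structure of $M$ --- is where two genuine soft spots sit.

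First, the equivariance $\Phi\circ\sigma=\Phi$ needs more justification than you give. It requires (i) that the desingularized forms $\alpha_{i,\epsilon}$, $i\geq 2$, be $\sigma$-invariant, which is not automatic from the paper's construction: after the reduction only the coefficient $\alpha_i^0$ vanishes, and the remaining Laurent tail $\bigl(\sum_{j\geq 1}\alpha_i^j t^j\bigr)\,df_\epsilon$ together with the $dt$-component of $\beta_i$ pick up a sign under $t\mapsto -t$ unless they are odd in $t$; and (ii) that the Tischler construction (perturbation to rational periods, choice of base point on $Z$) be carried out $\sigma$-equivariantly. Both can be arranged, but as written they are assumptions rather than checked facts. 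Second, and more seriously, the inference ``$\bar M\setminus Z$ has two components, hence the two points of $Z\cap S^1$ are not interchanged by monodromy'' is a non sequitur: a homeomorphism of $S^1$ can preserve each of the two arcs while swapping their common endpoints (take $z\mapsto -\bar z$ on the unit circle with marked points $\pm 1$), so triviality of the monodromy on the set of arcs does not formally yield triviality on the two-point fibre of $Z$. The step is rescued by orientations --- the fibre monodromy of $\Phi$ is orientation-preserving because $\bar M$ and $\T^{n-1}$ are oriented, and an orientation-preserving homeomorphism of $S^1$ preserving each arc must fix the endpoints --- or, bypassing monodromy entirely, by degree: if $Z$ were connected, $\Phi|_Z$ would be a covering of nonzero degree, so $[Z]\neq 0$ in $H_{n-1}(\bar M;\Z)$, contradicting the fact that $Z$ bounds either copy of $M$ in $\bar M$. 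With one of these repairs, and granting the equivariance, the remainder of your argument (reflection on each fibre, exactly two boundary tori, trivial $I$-bundle) is sound.
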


\section{Euler equations on  3-manifolds}

The Euler equations model the dynamics of an inviscid and incompressible fluid flow on a 3-dimensional manifold, see e.g.~\cite{AKh,P1}. For a smooth domain in $\mathbb{R}^3$ if we denote by $X$ the velocity field of the fluid and $P$ the pressure, which is a scalar function, then the equations can be written as follows,
$$\begin{cases}
	\pp{X}{t} + (X\cdot \nabla)X &= - \nabla P \\
	\operatorname{div}X&=0
\end{cases}.$$
Another vector field that has an important role in fluid dynamics is the vorticity, which is defined as
$$ \omega := \operatorname{curl}X. $$
This vector field is related to the local rotation of the fluid. Using the vorticity, one can rewrite the Euler equations as,
$$\begin{cases}
	\pp{X}{t} - X \times \omega &= - \nabla B \\
	\operatorname{div}X&=0
\end{cases},$$
where $B= P + \frac{1}{2}|X|^2$ is the Bernoulli function.

For any Riemannian $3$-manifold $(M,g)$ one can write the Euler equations
$$\begin{cases}
	\pp{X}{t} + \nabla_X X &= - \nabla P \\
	\operatorname{div}X&=0
\end{cases}.$$
where $\nabla_X$ is the covariant derivative, and the operators $\nabla$ and $\operatorname{div}$ are computed with the metric $g$. Using the Riemannian volume form $\mu$, the second condition can be expressed as follows,
$$ \mathcal{L}_X \mu= 0. $$
The vorticity is then the only vector field satisfying
$$  \iota_\omega \mu= d\alpha, $$
where $\alpha(\cdot)= g(X,\cdot)$ is the dual one-form of $X$ using the Riemannian metric. In terms of the vorticity, the equations read as in the Euclidean case:
$$\begin{cases}
	\pp{X}{t} - X \times \omega &= - \nabla B \\
	\operatorname{div}X&=0
\end{cases},$$
and the Bernoulli function is defined using the Riemannian form, as well as the vector product.

\textbf{Stationary solutions.} We will be interested in equilibrium configurations, i.e. in stationary solutions of these equations.
A well known fact is that the Bernoulli function is a first integral for both $X$ and $\omega$. In particular the stream lines are confined into the level sets of $B$.

The stationary Euler equations with the Bernoulli formulation are
$$ \begin{cases}
X \times \omega = \nabla B \\
 \operatorname{div} X=0
 \end{cases}.$$
In the analytic setting, Arnold noticed the following fact about solutions to these equations. Let $(M,g)$ be an analytic Riemannian manifold and let $X$ be an analytic solution of the equations. If $B$ is constant and $X$ is non-vanishing, the vorticity is proportional to $X$ everywhere, that is $\operatorname{curl}X=fX$ for some analytic function $f$. In this case, $X$ is called a Beltrami flow.

If $B$ is not constant, its critical set $Cr(B):=\{p\in M | \enspace \nabla B(p)=0\}$ has a stratified structure and its codimension is at least 1. In this case, Arnold showed that the structure of the stream lines of $X$ is very similar to the one of integrable systems described by the Arnold-Liouville theorem (see previous sections). We now provide a new proof using the existence of certain closed one-forms as in \cite{CM}.

\begin{theorem}[Arnold's structure theorem]\label{T:Arnold}
Let $X$ be an analytic stationary solution of the Euler equations on an analytic compact manifold with non constant Bernoulli function. The flow is assumed to be tangent to the boundary if there is one. Then there is an analytic set $C$ of codimension at least $1$ such that $M\backslash C$ consists of finitely many domains $M_i$ such that either
\begin{enumerate}
\item $M_i$ is trivially fibered by invariant tori of $X$ and on each torus the flow is conjugated to the linear flow,
\item or $M_i$ is trivially fibered by invariant cylinders of $X$ whose boundaries lie on the boundary of $M$, and all stream lines are periodic.
\end{enumerate}
\end{theorem}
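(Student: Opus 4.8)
The plan is to reduce the statement to the two-dimensional regular level sets of the Bernoulli function $B$ and to apply the Tischler theorem to them after producing suitable closed one-forms. First I would exploit analyticity: since $M$ is compact and $B$ is real-analytic, the set of critical values of $B$ is finite, and $Cr(B)$ together with the critical level sets forms an analytic set $C$ of codimension at least one. Removing $C$, the map $B$ is a proper submersion on each connected component $M_i$ of $M\setminus C$, so by Ehresmann's theorem each $M_i$ is a locally trivial, hence (over an interval) trivial, fibration whose fibres are the regular level surfaces $L=\{B=c\}$. It therefore suffices to understand a single connected $L$ and then reassemble over the interval. Controlling the critical values and the singular level sets in the analytic category is the first technical input, where I would invoke the standard analytic stratification results.

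On a regular level surface $L$ the identity $X\times\omega=\nabla B$ does the essential work. Pairing it with $X$ and with $\omega$ shows that both fields are tangent to $L$, and since $\nabla B\neq 0$ on the regular set the identity forces $X$ and $\omega$ to be nowhere zero and pointwise independent on $L$; in particular $\chi(L)=0$, so each connected orientable $L$ is a torus (when $L\cap\partial M=\emptyset$) or a cylinder (when $L$ meets $\partial M$ transversally). To feed Tischler's theorem I would exhibit two closed one-forms on $L$. The metric dual $\alpha=g(X,\cdot)$ restricts to a closed form $\beta_1:=\alpha|_L$, because $d\alpha=\iota_\omega\mu$ and the pullback of $\iota_\omega\mu$ to the $2$-surface $L$ vanishes (three vectors tangent to $L$ are linearly dependent); it is nonvanishing since $\beta_1(X)=|X|^2>0$. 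For the second form, writing $\mu=dB\wedge\sigma$ produces a canonical area form $\Omega_L:=\sigma|_L$, and from $\mathcal{L}_X\mu=0$ together with $\mathcal{L}_X\,dB=0$ one gets $\mathcal{L}_X\Omega_L=0$ on $L$; hence $\beta_2:=\iota_X\Omega_L$ is closed and nonvanishing. Since $\beta_2(X)=0\neq\beta_1(X)$, the two forms are everywhere independent on the interior of $L$.

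In the closed case I would apply Theorem \ref{thm:tic} with $r=2$ to conclude that $L$ fibres over $\T^2$, whence $L\cong\T^2$. To upgrade this to a \emph{linear} flow I would show the fields commute: a short computation using $\mu=dB\wedge\sigma$ and $\iota_\omega\iota_X\mu=dB$ yields $\Omega_L(X,\omega)\equiv 1$, and then $\iota_{[X,\omega]}\Omega_L=d(\Omega_L(\omega,X))=0$ forces $[X,\omega]=0$. Two commuting, independent, nowhere-vanishing fields on the compact connected $\T^2$ generate a transitive $\mathbb{R}^2$-action, so $L$ is an affine torus on which $X$ is a constant field, giving case (1) after reassembling trivially over the interval. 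In the boundary case $L$ is a cylinder with $\partial L\subset\partial M$; since $X$ is tangent to $\partial M$ it is tangent to $\partial L$, so the pullback of $\beta_2=\iota_X\Omega_L$ to each boundary circle vanishes, hence its period over the core circle is zero and $\beta_2=dh$ is exact. The regular level sets of $h$ are then exactly the $X$-orbits, $h$ is constant on the boundary circles, and this exhibits $L\cong \mathbb{S}^1\times[0,1]$ fibred by the $X$-orbits, so every stream line is periodic, giving case (2). This is precisely the ``otherwise'' alternative of Theorem \ref{thm:btic}, in which the restricted forms fail to stay independent on the boundary.

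The main obstacle I expect is twofold. On the analytic side it is the clean decomposition $M\setminus C=\bigsqcup_i M_i$ with $C$ of codimension at least one: one must control the critical values and singular level sets of $B$, which is where real-analyticity is genuinely used. On the geometric side it is the boundary dichotomy, namely verifying that at $\partial L$ the field $X$ is tangent to the boundary circle while $\omega$ is transverse to it, so that $\beta_2$ degenerates on $\partial L$ in exactly the way that produces cylinders with periodic orbits rather than tori. By contrast, the independence of $\beta_1,\beta_2$ and the relation $[X,\omega]=0$ become routine once the constancy $\Omega_L(X,\omega)\equiv 1$ is in hand.
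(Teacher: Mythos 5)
Your overall strategy is the paper's: the same two one-forms $\alpha|_L=g(X,\cdot)|_L$ and $\iota_X\mu_2|_L$ (your $\beta_1,\beta_2$), the same closedness and independence arguments, Tischler on the regular level sets, and the gradient-flow/Ehresmann trivialization over an interval. Where you diverge you are still correct, and in two places you supply details the paper only asserts: you actually prove $[X,\omega]=0$ via $\Omega_L(X,\omega)\equiv 1$ (the paper just cites commutativity of $X$ and $\operatorname{curl}X$), and you derive periodicity on the cylinders from exactness of $\beta_2$ (period over the core circle equals the period over a boundary circle, which vanishes) rather than from Poincar\'e--Bendixson plus area preservation as the paper does. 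Your Euler-characteristic argument ($X\times\omega=\nabla B\neq 0$ forces $X,\omega$ tangent, nonvanishing and independent, so $TL$ is trivial) also pins down the topology directly, partly bypassing the $b$-Tischler input (Theorem \ref{thm:btic} and Remark \ref{rem:trans}) that the paper uses in the boundary case.

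There is one genuine, though easily repaired, gap: your analytic set $C$ consists only of the critical level sets of $B$, whereas the paper's $C=C_1\cup C_2$ also removes the level sets that are \emph{tangent to $\partial M$ at some point}. Without removing $C_2$, the map $B$ restricted to $\partial M$ need not be a submersion on $M\setminus C$, so Ehresmann's theorem (in the version for manifolds with boundary) does not apply: a single component of $M\setminus C$ could contain both closed level sets and level sets with boundary, with the topology of the fiber jumping at a tangency level, and the clean dichotomy ``$M_i$ fibered by tori'' versus ``$M_i$ fibered by cylinders'' breaks down. You clearly sense the issue (you restrict to the case ``$L$ meets $\partial M$ transversally'' and flag the boundary dichotomy as a main obstacle), but the fix must appear in the construction of $C$ itself; one also needs compactness and analyticity again to guarantee that only finitely many tangency levels are being removed, so that $C$ remains an analytic set of codimension at least one and $M\setminus C$ has finitely many components.
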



\begin{proof}
We define first the analytic set $C$. Consider $C_1=\{B^{-1}(c):c \text{ is a critical value of }B \}$ and $C_2$ the level sets such that they are tangent at some point to the boundary. Take
$$ C= C_1 \cup C_2. $$
By compactness and analyticity~\cite{AKh}, it is a finite union of level sets of the function $B$ and hence it is an analytic set of codimension greater or equal to one.

Consider  the following one-forms. On the one hand,
$$   \beta= \iota_X \mu_2, $$
where $\mu_2= \iota_{\frac{\nabla B}{|\nabla B|^2}}\mu$
and $\mu$ is the volume in $M$. The form $\mu_2$ is sometimes called the Liouville form and satisfies $\mu= dB\wedge \mu_2$. On the other {hand} consider
$$  \alpha(\cdot)= g(X, \cdot) $$
where $g$ is the Riemannian metric in $M$. We claim that the pullback of these forms to a regular level set $i: N \hookrightarrow M$, $i^*\alpha$ and $i^*\beta$, are closed and independent. We recall that the $2$-form $i^*\mu_2$ is an area-form on $N$.

To prove their independence, first notice that the velocity field $X$ is tangent and non-vanishing on any regular level set of $B$, so
the one-forms $i^*\alpha$ and $i^*\beta$ are non-degenerate on $N$. Since the kernel of $i^*\beta$ is given by $X|_N$, and the kernel of $i^*\alpha$ is transverse to $X|_N$ because $i^*\alpha(X|_N)>0$, we conclude that $i^*\beta$ and $i^*\alpha$ are linearly independent at each point of $N$.


To prove that these one-forms are closed, recall the Euler stationary equations in terms of $\alpha$:
 \begin{equation*}
\begin{cases}
\iota_Xd\alpha= -dB \\
d\iota_X\mu=0
\end{cases}.
\end{equation*}
When restricted to a level set, the first equation implies that $\iota_X d (i^*\alpha)=0$. Since it is a two dimensional submanifold and $X$ is tangent to the level set this yields $d(i^*\alpha)=0$. Observe now that $\mu_2$ satisfies $dB\wedge \mu_2= \mu$. Using that expression and the second Euler equation we obtain,
\begin{align*}
	d\iota_X\mu &= d( \iota_X(dB\wedge \mu_2))= d( \iota_X dB\wedge \mu_2 - dB \wedge \iota_X\mu_2) \\
	&= -d(dB \wedge \iota_X\mu_2)= - dB \wedge d\iota_X\mu_2=0.
\end{align*}
This equality stands everywhere. Since in the neighborhood of the regular level set we have that $dB\neq 0$, we infer that $d\iota_X\mu_2= dB\wedge \gamma$ for some one-form $\gamma$. Accordingly, we obtain that $d(i^*\iota_X\mu_2)=0$, thus proving that $i^*\beta$ is also closed.

Now, suppose that $N$ has no boundary component. Then applying Theorem \ref{thm:tic} we deduce that it is a torus. If $N$ has a boundary, it must lie on $\partial M$ and since it is invariant under a non-vanishing field $X$, the boundary consists of finitely many periodic orbits. The fact that $X$ is non-vanishing and tangent to the boundary of the level set, implies that the pullback of $\alpha$ to the boundary is non-vanishing as well. By Remark \ref{rem:trans}  the first case of Theorem \ref{thm:btic} can be applied. Hence $N$ is an orientable surface with boundary that fibers over $S^1$, thus it is a  cylinder. This determines the topology of the regular level sets of $B$. The rest of the proof is standard. Indeed, let $\phi_t$ be the flow of the vector field $S= \frac{\nabla B}{dB(\nabla B)}$, which satisfies $dB(S)=1$. Then we have
$$ \pp{}{t}B(\phi_t(x))=dB(S)=1, \enspace B(\phi_0(x))=B(x). $$
We deduce that $B(\phi_t(x))= B(x)+t$ and hence the open set $M_i$ is a trivial fibration $\T^2 \times I$ for a real interval $I$ in the case that the level sets have no boundary. The same holds when the level sets are cylinders (due to the fact that in the complement of the set $C$ the level sets of $B$ intersecting the boundary have a transverse intersection). Since the vector field $X$ commutes with $\text{curl} X$ it follows that it is conjugated to a linear flow on each level set diffeomorphic to a torus. For the cylinder, all orbits are periodic as an easy consequence of the Poincar\'e-Bendixon theorem and the fact that $X$ preserves the area form $i^*\mu_2$.
\end{proof}
\begin{rem} This proof also works to obtain the topology of the regular level sets in the four dimensional Euler equations studied in \cite{GK}. The way to obtain the closed and independent one forms is done as in \cite{CM}.
\end{rem}

Arnold's theorem shows that for non constant Bernoulli functions, the situation is very similar to integrable systems. However for a constant $B$ a contact structure appears and the situation is the opposite: a non integrable one. This case will be analyzed in Section~\ref{S:Reeb}.

\section{Geometric structures on singular level sets}

In this section we would like to understand the geometric structure induced on some of the singular level sets of the Bernoulli function. In the analytic setting a lot can be said about the structure of the level sets of $B$, both regular or singular with some assumptions, as studied in \cite{A1} and \cite{CV}. In  Proposition 2.6 in \cite{CV}  a topological classification of the singular level sets is obtained when $B$ is analytic and $X$ is assumed to be non-vanishing. Namely,

\begin{prop}
Let $B$ be analytic and $X$ a non-vanishing Euler flow on a closed $3$-manifold $M$. Then, each singular level set of $B$ (finitely many) is a finite union of embedded $X$-invariant sets that are periodic orbits, $2$-tori, Klein bottles, open cylinders or open M\"obius strips.
\end{prop}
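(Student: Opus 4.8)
The plan is to combine the finiteness of critical values of an analytic function with the \L ojasiewicz structure theorem for analytic sets, and then to exploit the nowhere-vanishing flow both to discard impossible pieces and to identify the admissible ones. First I would record that, since $B$ is analytic and $M$ is compact, $B$ has only finitely many critical values, so there are finitely many singular level sets $\Sigma_c=B^{-1}(c)$. Each such $\Sigma_c$ is a compact real-analytic set, hence by the \L ojasiewicz stratification theorem it admits a finite Whitney stratification into connected analytic submanifolds of dimensions $0$, $1$ and $2$. Because $B\circ\phi_t=B$ for the flow $\phi_t$ of $X$, each $\phi_t$ is a diffeomorphism preserving $\Sigma_c$ and therefore preserving its canonical (minimal) stratification; as there are finitely many strata with finitely many components and $\phi_0=\mathrm{id}$, continuity in $t$ forces $\phi_t$ to map every connected stratum to itself. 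Thus every stratum is $X$-invariant and $X$ is tangent to it.

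Next I would eliminate the $0$-dimensional strata: a $0$-stratum would be a fixed point of the entire flow, contradicting that $X$ is nowhere zero. Consequently the frontier of any $1$-stratum $S$ can only consist of $0$-strata, of which there are none, so $S$ is closed and hence compact. A compact connected $1$-manifold carrying a nowhere-vanishing tangent field is a single periodic orbit, which yields the periodic-orbit pieces; moreover it shows that the frontier of the closure of any $2$-stratum is a union of such circles.

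For a $2$-dimensional stratum $\Sigma_2$ I would split into two cases. If $\Sigma_2$ is compact it is a closed surface admitting a nowhere-vanishing vector field, so $\chi(\Sigma_2)=0$, forcing $\Sigma_2$ to be a $2$-torus when orientable and a Klein bottle when non-orientable. If $\Sigma_2$ is non-compact, its closure $\overline{\Sigma_2}$ is a compact surface whose frontier consists of the limiting circles; since $X$ is nowhere zero and tangent to that boundary, $\chi(\overline{\Sigma_2})=0$, and the only compact connected surfaces with non-empty boundary and vanishing Euler characteristic are the annulus and the M\"obius band. Hence $\Sigma_2$ is an open cylinder or an open M\"obius strip, completing the list.

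The main obstacle is the local analysis near a limiting circle $\gamma$: I must verify that $\overline{\Sigma_2}$ is genuinely a compact surface with boundary along $\gamma$, rather than several analytic sheets meeting transversally there, and then distinguish the cylinder from the M\"obius strip. I would address this with a Poincar\'e section $D$ transverse to $X$ at a point of $\gamma$: the analytic curve $\Sigma_c\cap D$ is a finite union of analytic arcs through the center, each arc is swept by the flow into a $2$-stratum, and the first-return (holonomy) map acts on these arcs. An arc returned to itself with preserved orientation sweeps out a cylindrical collar, whereas an orientation-reversing return produces a M\"obius collar. Controlling this holonomy and the multiplicity of branches through the \L ojasiewicz description of the local analytic structure is the delicate point; once it is settled, the Euler-characteristic bookkeeping above closes the argument.
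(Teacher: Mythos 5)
The paper does not actually prove this proposition: it is imported verbatim from Cieliebak--Volkov \cite{CV} (their Proposition~2.6), so there is no in-paper argument to compare yours against. Judged on its own, your scaffolding is sound and is essentially the standard route: finiteness of critical values of an analytic function on a compact manifold, stratification of the singular level set, flow-invariance of the strata, exclusion of $0$-strata by non-vanishing of $X$, identification of the compact $1$-strata with periodic orbits, and the Poincar\'e--Hopf computation $\chi=0$ to pin down the surfaces. The case distinction ``closed surface with non-vanishing field $\Rightarrow$ torus or Klein bottle'' and ``compact surface with non-empty boundary, $\chi=0$ $\Rightarrow$ annulus or M\"obius band'' is correct.

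The genuine gap is exactly the one you flag at the end, and it is not a small one: the entire Euler-characteristic bookkeeping for the non-compact $2$-strata rests on knowing that each end of such a stratum is a collar (a half-open annulus or half-open M\"obius collar limiting onto a frontier circle), so that the \emph{end-compactification} is a compact surface with boundary carrying a non-vanishing field tangent to the boundary. Note that you should work with the end-compactification rather than the closure $\overline{\Sigma_2}$: when several sheets of the level set cross along a periodic orbit, or when the ideal boundary circle wraps multiply around the frontier orbit, $\overline{\Sigma_2}$ is not a manifold with boundary at all, so the sentence ``its closure is a compact surface whose frontier consists of the limiting circles'' is false as stated and must be replaced by the completion/compactification argument. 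Closing this requires actually deploying the local structure theorem for one-dimensional real-analytic germs (finitely many half-branches in a transverse disc), showing the first-return map permutes these half-branches, and ruling out non-collared accumulation; without that, the argument has no compact surface to apply Poincar\'e--Hopf to. A secondary point: invariance of the \emph{canonical} Whitney stratification under $\phi_t$ uses that $X$ (hence $\phi_t$) is analytic, which the statement does not make explicit; it is safer to work with the intrinsic decomposition of $\Sigma_c$ into its locally-Euclidean locus and its complement, which is preserved by any diffeomorphism preserving $\Sigma_c$, and then prove separately (again via the transverse-disc analysis) that the complement is a finite union of circles.
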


In this section we shall assume that $B$ is a Morse-Bott function instead of analytic and we shall not impose any assumptions on $X$. {The standard Arnold's theorem is studied under analyticity assumptions. The next natural scenario would be to consider Morse-Bott functions as they are well-behaved at the critical set and are dense in the set of smooth functions. This assumption is not uncommon in our context: for instance Arnold's structure theorem is known to hold for Morse-Bott Bernoulli functions if the manifold has no boundary. In~\cite{EG2} the assumption considered is that of stratified singularities, which would go one step further.} For Morse-Bott singular level sets we will see that some $b$-symplectic structures appear, which provides a new connection between these Poisson structures and physics. For these structures to appear, we need the existence of a singular submanifold in a level set thay might end up being the critical hypersurface of a $b$-symplectic manifold. For this to make sense we need level sets with a regular part and a singular one. Since the singularities are of Morse-Bott type, the only two options that admit this structure are the following two local forms for $B$:
\begin{enumerate}
\item An isolated critical point of saddle type: $B= x^2 + y^2 - z^2$.
\item A 1-dimensional critical set of saddle type: $B= x^2 - y^2$.
\end{enumerate}

We are interested in the case where the level set is compact, which will have a topological singularity. In the first case the singularity is a point in a surface. In the second case the singularity is a circle. In both cases there is a topological desingularization to obtain a manifold with a codimension one singular submanifold. The structure that we are interested in is the following. If $\mu$ is the Riemannian volume in the $3$-manifold then the area form preserved by $X$ in a level set of $B$ is $i^*\mu_2$, as explained in the proof of Theorem~\ref{T:Arnold}, where
$$  \mu_2 = \iota_{\frac{\nabla B}{|\nabla B|^2}}\mu, $$
and $i$ is the inclusion of the level set of $B$ into $M$. Let us answer to the following question: what kind of geometric structure is $i^*\mu_2$ in these desingularized singular level sets?

\paragraph{Case 1} Consider that $B$ around the singularity looks like $B= x^2+y^2-z^2$. The volume form will be locally $\mu= dx\wedge dy \wedge dz$. The gradient of $B$ is $(2x,2y,-2z)$ and hence the vector field we are interested in is
$$X= \frac{\nabla B}{|\nabla B|^2}= (\frac{x}{x^2+y^2+z^2}) \pp{}{x} + (\frac{y}{x^2+y^2+z^2}) \pp{}{y} - (\frac{z}{x^2+y^2+z^2}) \pp{}{z}.  $$

Denoting $r^2= x^2+y^2+z^2$ and computing the two-form we obtain
$$ \mu_2 = \frac{x}{r^2} dy\wedge dz - \frac{y}{r^2} dx \wedge dz - \frac{z}{r^2}dx\wedge dy. $$

Let $i:N \hookrightarrow M$ be the inclusion of the level set $N$ into $M$, in coordinates $i:(\theta, \omega) \mapsto ( \omega \cos{\theta}, \omega \sin{\theta},\omega)$. A simple computation yields,
$$ i^*\mu_2=  d\theta \wedge d\omega. $$
This already extends to an area form, one can think of it as a polar blow-up. However, to end up with a concrete smooth manifold we can also realize the topological singularity in a cylinder.
Let $\sigma$ be the desingularization
\begin{equation*}
\sigma: \begin{cases}
x= u.w \\
y=v.w \\
z=w
\end{cases}.
\end{equation*}

This desingularization sends the cone to a cylinder.

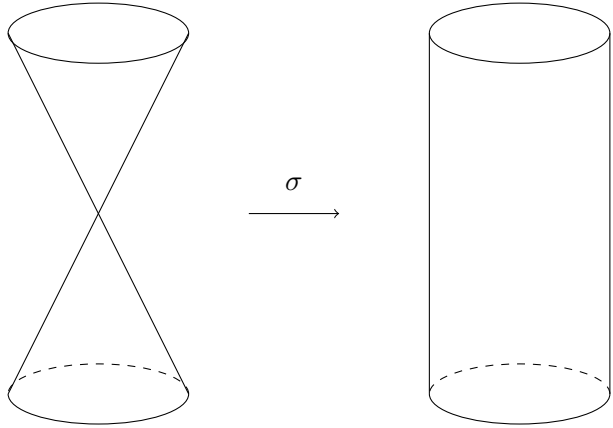
\begin{figure}[!h]
\begin{center}
\begin{tikzpicture}[scale=0.8]

\draw (0,3,0) ellipse (1.5cm and 0.5cm);
\draw (-1.5,3,0)--(1.5,-3,0);
\draw (1.5,3,0) -- (-1.5,-3,0);
\draw [dashed](1.5,-3,0) arc (0:180:1.5cm and 0.5cm);
\draw (1.5,-3,0) arc (0:-180:1.5cm and 0.5cm);

\draw [->] (2.5,0,0)--(4,0,0);
\draw (3.25,0.5,0) node{$\sigma$} ;

\draw (7,3,0) ellipse (1.5cm and 0.5cm);
\draw (5.5,3,0)--(5.5,-3,0);
\draw (8.5,3,0) -- (8.5,-3,0);
\draw [dashed](8.5,-3,0) arc (0:180:1.5cm and 0.5cm);
\draw (8.5,-3,0) arc (0:-180:1.5cm and 0.5cm);

\end{tikzpicture}
\end{center}
\caption{Case $1$ desingularization}
\end{figure}
Letting $j: (\theta,\omega) \mapsto (\cos \theta, \sin \theta, \omega)$ be the inclusion of the cylinder, we obtain
$$ j^*\sigma^*\mu_2=   d\theta \wedge d\omega, $$
which is a symplectic structure. This is a local model but using bump functions one obtains a symplectic surface globally defined.

\paragraph{Case 2} Consider now a point in a 1-dimensional critical set of $B$ of saddle type; hence the function looks locally as $B= x^2-y^2$. Again the volume form will be written $\mu= dx\wedge dy \wedge dz$. The gradient of $B$ is $(2x,-2y,0)$ and the vector field is
$$ X= ( \frac{x}{x^2+y^2} ) \pp{}{x} - (\frac{y}{x^2+y^2} )\pp{}{y}. $$
Denoting $r^2= x^2+ y^2$ the two form is $\mu_2= \frac{x}{r^2}dy\wedge dz + \frac{y}{r^2}dx\wedge dz$. The desingularization applied now is
\begin{equation*}
\sigma: \begin{cases}
x= u.v \\
y=v\\
z=w
\end{cases}.
\end{equation*}
It sends the two intersecting planes to two separate ones. We are realizing the topological singularity of the level, and hence forgetting now about the function $B$. We analyze the structure of $\mu_2$ after desingularization and restricted to the level set.

\begin{figure}[!h]
\begin{center}
\begin{tikzpicture}[scale=0.8]

\draw (-2,2,0) -- (2,1,0);
\draw (-2.4,1,0) -- (2.4,2,0);
\draw(-2.4,1,0) -- (-2.4,-4,0);
\draw(2.4,2,0) -- (2.4,-3,0);

\draw [dashed] (-2,1.1,0) -- (-2,-3,0);
\draw (2,1,0) -- (2,-4,0);

\draw(-2.4,-4,0) -- (0,-3.5,0);
\draw[dashed](0,-3.5,0) -- (2,-3.1,0);

\draw (2,-4,0)--(0,-3.5,0);
\draw [dashed](0,-3.5,0)--(-2,-3,0);

\draw (-2,2,0)--(-2,1.1,0);
\draw (2.4,-3,0)--(2,-3.1);

\draw [red](0,1.5,0)--(0,-3.5,0);

\draw[->] (3.3,-1.2,0)--(4.7,-1.2,0);
\draw (4,-0.9,0) node{$\sigma$};


\draw (6,2,0) -- (6,1,0);
\draw [dashed](6,1,0) -- (6,-3,0);
\draw (10,1,0) -- (10,-4,0);

\draw(5.6,1,0) -- (5.6,-4,0);
\draw(10.4,2,0) -- (10.4,-3,0);

\draw(5.6,1,0)--(10,1,0);
\draw(5.6,-4,0)--(10,-4,0);

\draw(6,2,0)--(10.4,2,0);
\draw[dashed](6,-3,0)--(10,-3,0);

\draw (10,-3,0)--(10.4,-3,0);

\draw [red] (7.8,1,0)--(7.8,-4,0);
\draw [red,dashed] (8.2,2,0)--(8.2,-3,0);

\end{tikzpicture}
\end{center}
\caption{Case $2$ desingularization}
\end{figure}
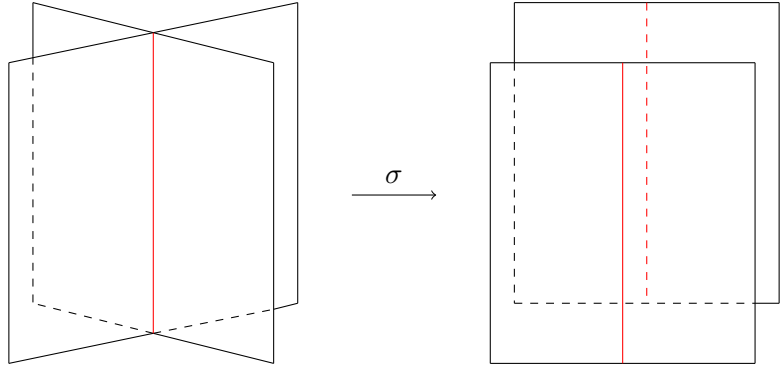
If $j$ is the inclusion of any of the two planes, then we have
$$ j^*\sigma^*\mu_2= \frac{1}{v} dv \wedge d\omega, $$
where the change of sign depends on which one of the planes we consider. One could be confused by the change of induced orientations on the hyperplanes outside of the critical curve: However $\mu_2$ was already well defined in the regular part of the level and there is not a problem of sign.

This model is local, but since the function is Morse-Bott, the desingularization is applied through a circle. Assuming that the negative (and positive) normal bundles of the singular set are orientable (to avoid problems as in \cite{Sm}), the normal form $B=x^2-y^2$ holds on a neighborhood of the critical circle (for appropriate coordinates). Therefore one obtains globally a $b$-surface. Accordingly, we have produced a $b$-symplectic structure on each component, and globally a $b$-surface having two circles as critical set.

For the sake of simplicity in the analysis, we have used a model where the metric looks like the Euclidean one near the singular sets. This is true for \emph{nice metrics} with respect to the Morse-Bott function $B$ as the ones introduced in Hutching's thesis \cite{H1, H2}. Nevertheless, the qualitative picture described above is independent of this choice.

For our purposes, the most interesting situation is Case~2, where the singular locus is a whole curve. Observe that the area form $\mu_2$ always satisfies the following identity:
\begin{equation} \label{equ}
dB\wedge \mu_2= \mu.
\end{equation}
By construction, when restricted to the planes obtained after the desingularization procedure, the form $i^*\mu_2$ is an area form that goes to infinity when approaching the critical curve $Z$. Letting $i:N \hookrightarrow M$ be the inclusion of any of the two planes $\{x=y\}$ or $\{x=-y\}$ we have
$$i^*\mu_2 = \frac{1}{f} \omega,$$
for a function $f\in N$ that vanishes along $Z$ and an area form $\omega$. Taking coordinates such that $\omega= du \wedge dv$ we can consider the dual vector field $\Pi_2= f \pp{}{u}\wedge \pp{}{v}$. Observe also that equation (\ref{equ}) holds everywhere and $dB= 2x dx - 2ydy$ induces different orientations on each side of $Z$ inside $N$.
\begin{figure}[!h]
\begin{center}
\begin{tikzpicture}[scale=0.8]

\draw (-2,2,0) -- (2,1,0);
\draw (-2.4,1,0) -- (2.4,2,0);
\draw(-2.4,1,0) -- (-2.4,-4,0);
\draw(2.4,2,0) -- (2.4,-3,0);

\draw [dashed] (-2,1.1,0) -- (-2,-3,0);
\draw (2,1,0) -- (2,-4,0);

\draw(-2.4,-4,0) -- (0,-3.5,0);
\draw[dashed](0,-3.5,0) -- (2,-3.1,0);

\draw (2,-4,0)--(0,-3.5,0);
\draw [dashed](0,-3.5,0)--(-2,-3,0);

\draw (-2,2,0)--(-2,1.1,0);
\draw (2.4,-3,0)--(2,-3.1);
\draw [red](0,1.5,0)--(0,-3.5,0);

\draw [->,opacity=0.8] (0,-3.5,0)--(0,-3.5,-2);
\draw (0,-3.5,-2.5) node{$y$};
\draw [->,opacity=0.8] (0,-3.5,0)--(1.5,-3.5,0);
\draw (1.7,-3.6,0) node{$x$};

\draw [|->] (1.7,1.45,0)--(2.3,1.35,0);

\draw [|->] (-1.3,1.5,0)--(-1.9,1.4,0);

\draw [|->] (-1.1,-1.4,0)--(-1.6,-1.1,0);

\draw [|->] (0.8,-0.9,0)--(1.3,-0.6,0);
%

\end{tikzpicture}
\end{center}
\caption{Orientation induced by $dB$}
\end{figure}
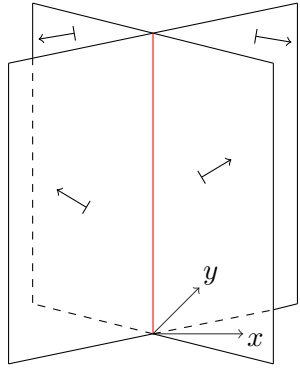

Also $dB$ vanishes when restricted to one of the planes $\{x=y\}$ or $\{x=-y\}$ in first order. This ensures that the pole of $\mu_2$ is of order one since $\mu$ in $M$ is a volume form. In the desingularized manifold, which is a surface, the bivector field defines a Poisson structure that vanishes along a curve in order $1$ (which we call critical curve). Thus we have obtained a $b$-symplectic structure on the desingularized surface. Thus proving,
\begin{prop}
Singular sets of the second kind can be desingularized into surfaces with a $b$-symplectic structure that is preserved by the flow of the fluid.
\end{prop}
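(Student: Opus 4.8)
The plan is to begin from the local normal form and the desingularization already set up for Case~2, identify the restricted Liouville form $i^*\mu_2$ as a $b$-symplectic form on the desingularized surface, and then transport the invariance computation from the proof of Theorem~\ref{T:Arnold} to the singular setting.

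First I would fix the local model. Since $B$ is Morse-Bott with a one-dimensional saddle critical set and the metric is taken to be a \emph{nice} metric, near the critical circle there are coordinates in which $B=x^2-y^2$, $\mu=dx\wedge dy\wedge dz$, and the singular level $B^{-1}(0)=\{x=y\}\cup\{x=-y\}$ consists of two sheets meeting along the critical curve $\{x=y=0\}$. Applying the desingularization $\sigma: x=uv,\ y=v,\ z=w$ separates the two sheets into $\{u=1\}$ and $\{u=-1\}$, producing a smooth $b$-surface $N$ whose critical set $Z$ is the union of the corresponding circles $\{v=0\}$.

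Next I would verify the $b$-symplectic conditions for $i^*\mu_2$. Restricting to either sheet, the computation already carried out gives $j^*\sigma^*\mu_2=\pm\frac{dv}{v}\wedge d\omega$, a genuine $b$-two-form with critical hypersurface $Z=\{v=0\}$. Closedness is automatic, since a $b$-two-form on a surface is of top $b$-degree and hence $d(i^*\mu_2)=0$. For maximal rank, observe that $\frac{dv}{v}\wedge d\omega$ is precisely the standard $b$-area form, nonvanishing as a section of $\Lambda^2(\,^bT^*N)$; dually, the bivector $v\,\partial_v\wedge\partial_\omega$ vanishes transversally to first order along $Z$, so it is a $b$-Poisson bivector. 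That the pole of $\mu_2$ is exactly of order one is forced by the identity \eqref{equ}, $dB\wedge\mu_2=\mu$, together with $dB$ vanishing to first order along each sheet and $\mu$ being a nondegenerate volume form; this is what guarantees a $b$- rather than a $b^m$-structure with $m>1$. By the $b$-Poisson/$b$-symplectic correspondence recalled above, $i^*\mu_2$ is $b$-symplectic. Under the orientability hypothesis on the normal bundles the normal form holds on a full neighborhood of the critical circle, and patching the local models with bump functions yields a globally defined $b$-symplectic structure on $N$.

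Finally, for invariance under the fluid flow I would recall from the proof of Theorem~\ref{T:Arnold} that on any level set the one-form $i^*\beta=i^*\iota_X\mu_2$ is closed. Since the level set is two-dimensional, $d(i^*\mu_2)=0$, and Cartan's formula gives $\mathcal{L}_X(i^*\mu_2)=d\iota_X(i^*\mu_2)=d(i^*\beta)=0$ on the regular part away from $Z$. Because the critical set of $B$ is $X$-invariant, $X$ is tangent to the critical circle and is therefore a $b$-vector field; as $i^*\mu_2$ is a $b$-form, the identity $\mathcal{L}_X(i^*\mu_2)=0$ extends across $Z$ by continuity, so the $b$-symplectic form is preserved by the flow. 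I expect the main obstacle to be exactly this passage across the critical curve in the $b$-category together with the global gluing: one must check that the local $b$-symplectic models and the invariance identity remain compatible along the entire critical circle, which is precisely where the orientability of the normal bundles and the validity of the Morse-Bott normal form on a full neighborhood of the circle are needed.
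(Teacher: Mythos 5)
Your proposal is correct and follows essentially the same route as the paper: the same local normal form $B=x^2-y^2$, the same desingularization $\sigma$, the same computation $j^*\sigma^*\mu_2=\pm\frac{1}{v}\,dv\wedge d\omega$, the order-one pole deduced from $dB\wedge\mu_2=\mu$, the global statement via orientability of the normal bundles, and flow-invariance inherited from the closedness of $i^*\iota_X\mu_2$ established in the proof of Theorem~\ref{T:Arnold}. You in fact spell out the Cartan-formula step and the extension across $Z$ slightly more explicitly than the paper does, but the argument is the same.
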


\begin{rem}
By using regularization-type techniques like in \cite{MO}, one can produce artificially singularities or order $2k+1$ for any $k\in \mathbb{N}$.
\end{rem}

\section{Beltrami fields in $b$-manifolds}\label{S:Reeb}

When the Bernoulli function is constant,  Beltrami fields are obtained. These are vector fields that are parallel to their vorticity i.e. $\operatorname{curl}X=fX$ for a function $f\in C^{\infty}(M)$. When $f$ and $X$ are non-vanishing we speak about nonsingular rotational Beltrami fields. In \cite{EG} a correspondence between these fields and rescaled Reeb vector fields of contact structures is established.

We recall that the motivation for $b$-manifolds is studying manifolds with cylindrical ends. The critical surface captures the asymptotic behavior of geometric structures. We see a Riemannian manifold with a cylindrical end as the interior of a compact manifold with boundary.

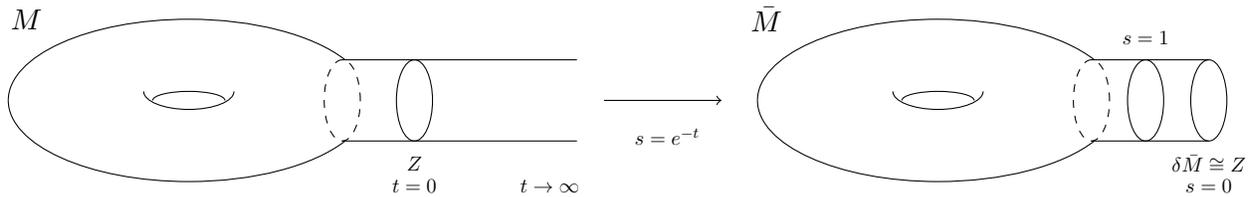
\begin{figure}[!h]
\begin{center}
\begin{tikzpicture}[scale=1.2]

\draw (-1.8,0.9,0) node[scale=1]{$M$};
\draw (6.4,0.9,0) node[scale=1]{$\bar M$};

\draw (-2,0,0) arc (-180:-30:2 and 0.9);
\draw (-2,0,0) arc (-180:-330:2 and 0.9);

\draw (0.4,0,0) arc (0:180:0.4 and 0.1);
\draw (-0.5,0.1,0) arc (-180:0:0.5 and 0.2);

\draw (1.7,0,0)[dashed] ellipse (0.2 and 0.45);

\draw (1.7,0.45,0)--(4.3,0.45,0);
\draw (1.7,-0.45,0)--(4.3,-0.45,0);

\draw (2.5,0,0) ellipse (0.2 and 0.45);
\draw (2.5,-0.7,0) node[scale=0.7]{$Z$};
\draw (2.5,-0.95,0) node[scale=0.7]{$t=0$};
\draw (4,-0.95,0) node[scale=0.7]{$t\rightarrow \infty$};


\draw [->] (4.6,0,0) -- (5.9,0,0);
\draw (5.3, -0.4,0) node[scale=0.7]{$ s=e^{-t} $};

\draw (6.3,0,0) arc (-180:-30:2 and 0.9);
\draw (6.3,0,0) arc (-180:-330:2 and 0.9);

\draw (8.7,0,0) arc (0:180:0.4 and 0.1);
\draw (7.8,0.1,0) arc (-180:0:0.5 and 0.2);

\draw (10,0,0)[dashed] ellipse (0.2 and 0.45);

\draw (10,0.45,0)--(11.3,0.45,0);
\draw (10,-0.45,0)--(11.3,-0.45,0);

\draw (10.6,0,0) ellipse (0.2 and 0.45);
\draw (10.6,0.7,0) node[scale=0.7]{$s=1$};

\draw (11.3,0,0) ellipse (0.2 and 0.45);
\draw (11.3,-0.95,0) node[scale=0.7]{$s=0$};
\draw (11.3,-0.7,0) node[scale=0.7]{$\delta \bar M \cong Z$};

\end{tikzpicture}
\end{center}
 \caption{Compactification to a $b$-manifold}
 \end{figure}

 If we take the Euler equations in a manifold of this kind, one can consider them after the transformation to a $b$-manifold. The equations obtained are the same but with a resulting $b$-metric $g$ and $b$-volume form $\mu$ that capture the asymptotical behavior of the geometric structures. Now working in the $b$-tangent and cotangent bundles, in terms of the form $\alpha(\cdot)= g(X, \cdot)$ and the Bernoulli function, the Euler equations are still of the form:
 \begin{equation*}
\begin{cases}
\iota_Xd\alpha= -dB \\
d\iota_X\mu=0,
\end{cases}
\end{equation*}
The special case of Beltrami fields is,
\begin{equation*}
\begin{cases}
d\alpha= f \iota_X\mu \\
d\iota_X\mu=0,
\end{cases}
\end{equation*}
for a non-vanishing function $f\in C^\infty (M)$. Following similar arguments as in the contact case, we can prove a correspondence between Beltrami fields in $b$-manifolds and $b$-contact structures. This is the formalization of the following idea: in the cylindrical manifold we can apply the usual correspondence of Beltrami fields with contact structures. When obtaining the $b$-manifold the interior admits again a contact structure, but the $b$-manifold looks only locally as the original manifold. Globally speaking the $b$-contact structure gives more information about the global asymptotic behavior close to the boundary.
\begin{rem}
For the discussion in this section let us put emphasis in a particularity of $b$-vector fields illustrating it with an example. Consider the $b$-manifold $(\mathbb{R}^2, Z=\{ (x,y) | x=0 \})$ with basis $\langle \pp{}{y}, x\pp{}{x} \rangle$. The $b$-vector field $X= x\pp{}{x}$ vanishes in the usual sense of a vector field when $x=0$. However as a section of the $b$-tangent bundle the term $(x\pp{}{x})$ is not vanishing. When paired with the dual form $\alpha= \frac{dx}{x}$ it satisfies $\alpha(X)=1$ even in $Z$.
\end{rem}
The statement of the $b$-Beltrami fields and $b$-contact correspondence is now presented.
\begin{theorem}
Let $M$ be a $b$-manifold of dimension three. Any rotational Beltrami field and non-vanishing as a section of $\prescript{b}{}{TM}$ on $M$ is a Reeb vector field (up to rescaling) for some $b$-contact form on $M$. Conversely given a $b$-contact form $\alpha$ with Reeb vector field $X$ then any nonzero rescaling of $X$ is a rotational Beltrami field for some $b$-metric and $b$-volume form on $M$.
\end{theorem}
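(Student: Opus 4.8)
The plan is to transcribe the classical Beltrami--contact correspondence of \cite{EG} into the $b$-category, exploiting the fact that $b$-forms are genuine sections of the exterior powers of the honest vector bundle $^bT^*M$, so that Cartan calculus (in particular $\iota_X\iota_X=0$ and the graded Leibniz rule) holds verbatim. Throughout, the hypothesis that $X$ is non-vanishing \emph{as a section of} $^bTM$ plays the role that ordinary nonvanishing plays in the smooth case; as the preceding remark shows, this is strictly weaker than ordinary nonvanishing along $Z$, and it is exactly what guarantees that $\alpha(X)=g(X,X)$ is a non-vanishing $b$-function.

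For the direct implication I would start from a rotational $b$-Beltrami field $X$ and set $\alpha(\cdot)=g(X,\cdot)$, a $b$-one-form. The $b$-Beltrami equations give $d\alpha=f\,\iota_X\mu$ with $f$ nowhere zero. Using the top-degree identity $\iota_X(\alpha\wedge\mu)=0$ one obtains the pointwise relation $\alpha\wedge\iota_X\mu=\alpha(X)\,\mu$, hence $\alpha\wedge d\alpha=f\,\alpha(X)\,\mu=f\,|X|^2\,\mu$. Since $f$ is nowhere zero, $|X|^2>0$ as a $b$-function, and $\mu$ is a $b$-volume form, the right-hand side is a non-vanishing section of $\Lambda^3({}^bT^*M)$, so $\alpha$ is a $b$-contact form. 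Finally $\iota_X d\alpha=f\,\iota_X\iota_X\mu=0$ shows that $X$ spans $\ker d\alpha$, i.e.\ $X$ is proportional to the Reeb field; the normalisation $\alpha(X)=|X|^2$ identifies the Reeb field as the rescaling $X/|X|^2$, which proves the claim.

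For the converse I would fix a $b$-contact form $\alpha$ with Reeb field $X$ and a nowhere-vanishing rescaling $Y=hX$; replacing $X$ by $-X$ if necessary I may assume $h>0$. The idea is to build a $b$-metric tailored to $Y$: declaring $Y$ orthogonal to $\ker\alpha$, setting $g(Y,Y):=\alpha(Y)=h$, and choosing any $b$-metric on $\ker\alpha$ produces a $b$-metric with $g(Y,\cdot)=\alpha$; let $\mu$ be its $b$-Riemannian volume. The heart of the argument is then a pointwise linear-algebra lemma: the Reeb identity $\iota_X d\alpha=0$ together with $\iota_X(\iota_Y\mu)=h\,\iota_X\iota_X\mu=0$ shows that both $d\alpha$ and $\iota_Y\mu$ lie in the kernel of $\iota_X$ acting on $b$-two-forms, which at each point of a three-manifold is one-dimensional (the multiples of the horizontal area form on $\ker\alpha$). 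Hence $d\alpha=f\,\iota_Y\mu$ for a function $f$ that is nowhere zero, since $d\alpha$ is non-degenerate on $\ker\alpha$ by the contact condition while $\iota_Y\mu\neq0$. This is precisely $\operatorname{curl}_g Y=f\,Y$, so $Y$ is a rotational $b$-Beltrami field for $g$ and $\mu$.

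The main obstacle I expect is not the algebra, which is formal, but checking that all the objects produced are bona fide $b$-objects near the critical hypersurface $Z$. Concretely, one must verify that the metric constructed in the converse is non-degenerate as a bundle metric on $^bTM$, so that its volume is a genuine $b$-volume form, and this is exactly where the $b$-nonvanishing of $X$, hence of $Y$, is indispensable: a field vanishing classically along $Z$ but not as a $b$-section still yields a well-defined, non-degenerate adapted $b$-metric, precisely as the remark illustrates. A secondary bookkeeping point is the sign of the rescaling factor, handled by passing to $-\alpha$ when $h<0$, and the verification that the ratio $f$ is a well-defined $b$-function across $Z$.
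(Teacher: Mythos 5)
Your direct implication is correct and is essentially the paper's argument: the paper computes $\alpha\wedge d\alpha=fh\norm{X}^2e^1\wedge e^2\wedge e^3$ in an orthonormal $b$-frame, which is the same identity $\alpha\wedge\iota_X\mu=\alpha(X)\,\mu$ you derive from $\iota_X(\alpha\wedge\mu)=0$; the point that ``non-vanishing as a section of $^bTM$'' is what makes $\alpha(X)=\norm{X}^2$ a nowhere-zero function is exactly the role it plays in the paper.

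The converse, however, has a genuine gap. Being a (rotational) Beltrami field in the sense used here means satisfying \emph{both} equations $d\alpha=f\,\iota_Y\mu$ and $d\iota_Y\mu=0$; the second one is the incompressibility condition and is what makes $Y$ a stationary Euler flow. You establish only the first. With your choice of $\mu$ as the Riemannian volume of the adapted metric, $\iota_Y\mu=\tfrac{1}{f}\,d\alpha$, so $d\iota_Y\mu=d\bigl(\tfrac{1}{f}\bigr)\wedge d\alpha$, and contracting with $Y$ shows this vanishes if and only if $Y(f)=0$. Since $f$ depends on the arbitrary rescaling $h$ and on the arbitrary fibrewise metric you put on $\ker\alpha$, there is no reason for $f$ to be a first integral of $Y$, so your $Y$ is in general not volume-preserving for your $\mu$. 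The fix is exactly the paper's choice: do not take the Riemannian volume, but set $\mu=\tfrac{1}{h}\,\alpha\wedge d\alpha$ (equivalently, rescale your Riemannian volume by the factor $f$, since $\alpha\wedge d\alpha=fh\,\mu_{\mathrm{Riem}}$). Then $\iota_Y\mu=d\alpha$ on the nose, so the proportionality factor is the constant $1$ and $d\iota_Y\mu=dd\alpha=0$ is automatic. Two further small points: the paper builds the metric as $g=\tfrac{1}{h}\alpha\otimes\alpha+d\alpha(\cdot,J\cdot)$ with $J$ a compatible almost complex structure on the rank-$2$ subbundle $\ker\alpha\subset{}^bTM$ (whose constant rank it verifies via the $b$-Darboux normal forms) --- your ``any $b$-metric on $\ker\alpha$'' is enough for the curl equation but the compatible choice is what keeps the bookkeeping with $\alpha\wedge d\alpha$ clean; and your sign discussion for $h<0$ is unnecessary once the volume form is $\tfrac{1}{h}\alpha\wedge d\alpha$.
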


\begin{proof}
The proof is very similar to the one for usual Beltrami fields. One just needs to work with the $b$-tangent bundle $^b TM$ and its dual instead of the tangent bundle. Let $X$ be a Beltrami field in $(M,Z)$, a $b$-manifold of dimension three. For this implication we can follow \cite{EG}. Denote $e_1=X/\norm{X}$ which is globally defined as a $b$-vector field since $X$ is a non-vanishing section of $^bTM$ and take a couple $e_2,e_3$ to have an orthonormal frame. Then consider $\alpha(\cdot)= g(X,\cdot)= \norm{X}e^1$ where $e^1$ is the dual to $e_1$. This form is the dual of a $b$-vector field by a $b$-metric and hence defines a one $b$-form. Recall that $d\alpha= f\iota_X\mu$ which is also a $b$-form of degree $2$ since it is a contraction of a $b$-vector field by a $b$-volume form.

The $b$-volume form has the form $\mu= h e^1\wedge e^2 \wedge e^3$ for a non-vanishing function $h\in C^\infty(M)$. Then it is clear that
$$ \alpha \wedge d\alpha= g(X,\cdot)\wedge f\iota_x\mu=fh\norm{X}^2e^1\wedge e^2\wedge e^3 \neq 0. $$
Also $\iota_Xd\alpha= f\iota_X\iota_X\mu=0$ and $X$ is a rescaled Reeb vector field of a $b$-contact structure given by $\alpha$.

Conversely, consider a $b$-contact form $\alpha$ and a rescaling of its Reeb vector field $Y=hR$ for $h\in C^\infty(M)$ a non-vanishing function. We will follow the idea in \cite{G2} for this implication. Using the Darboux theorem for $b$-contact forms, Theorem \ref{bcDarb}, the subbundle $\ker \alpha$ is generated by $\xi_p= \langle z\pp{}{z}, \pp{}{y_1} \rangle$ or $\xi_p= \langle -\pp{}{x_1} + z\pp{}{z}, \pp{}{y_1} \rangle $ if the Reeb vector field does not vanish as a smooth vector field. When $R$ vanishes as a smooth vector field (but not as a $^b TM$ section) then $R=z\pp{}{z}$ and $\xi_p= \langle \pp{}{x_1}, x_1 z\pp{}{z} - \pp{}{y_1} \rangle$. Hence $\xi$ is a vector bundle of constant rank $2$ over $M$, it is indeed a subbundle of $^bTM$. Recall now from \cite{McS} the following proposition:
\begin{prop}
Let $E \rightarrow M$ be a $2n$-dimensional vector bundle with a non degenerate bilinear form $\omega_q$ in each fiber $E_q$ which varies smoothly with $q\in M$. Then there exists an almost complex structure which is compatible with $\omega$, i.e. such that $\omega(\cdot, J \cdot)$ is positive definite.
\end{prop}
Let $g$ be the $b$-metric
$$ g(u,v)= \frac{1}{h} (\alpha(u) \otimes \alpha(v))+ d\alpha(u,Jv). $$
The vector field Y satisfies $\iota_Yg=\alpha$. Take $\mu= \frac{1}{h}\alpha\wedge d\alpha$ as $b$-volume form in $M$. It obviously satisfies $\iota_Y\mu=d\alpha$. Hence $Y$ is a Beltrami field (with constant proportionality factor) for this choice of $g$ and $\mu$.

%

\end{proof}

\begin{ex} [ABC flows]
A very well-known family of Beltrami flows in $\T^3$ are the ABC flows:
$$ X(x,y,z)= [A\sin z + C \cos y] \pp{}{x} + [B\sin x + A\cos z] \pp{}{y} + [C\sin y + B\cos x]\pp{}{z}.  $$
Everything is computed in $\mathbb{R}^3$ and then quotiented depending on which hypersurface we consider. Taking as hypersurface $Z=\{z=0 \}$ one can check for which values of the parameters the $b$-vector field
$$ X(x,y,z)= [A\sin z + C \cos y] \pp{}{x} + [B\sin x + A\cos z] \pp{}{y} + [C\sin y + B\cos x]z\pp{}{z} $$
is a Beltrami field in the corresponding $b$-manifold. The metric and volume forms are
$$ g= dx^2 + dy^2 + (\frac{dz}{z})^2, \enspace \mu= dx\wedge dy \wedge \frac{dz}{z}. $$
We compute the one form
$$\alpha= g(X,\cdot) = [A\sin z + C \cos y] dx + [B\sin x + A\cos z] dy + [C\sin y + B\cos x]\frac{dz}{z},$$
and the contraction by the volume
$$ \iota_X\mu=[-B\sin x - A\cos z] dx\wedge \frac{dz}{z}+ [A\sin z + C \cos y] dy\wedge \frac{dz}{z} + [C\sin y + B\cos x] dx\wedge dy. $$
It is clear that $d\iota_X\mu=0$, it remains to check the equation $d\alpha= f\iota_X \mu$. Computing the derivative of alpha
$$ d\alpha= [ - B \sin x -zA\cos z] dx\wedge \frac{dz}{z} + [zA\sin z + C\cos y   ] dy\wedge \frac{dz}{z}+[ C\sin y + B\cos x ] dx\wedge dy. $$
When differentiating with respect to $z$ in the $b$-cotangent bundle, a $z$ factor appears. For $d\alpha= f \iota_X \mu$ to be satisfied, we need $A=0$ and $f=1$. The two-parameter family of vector fields
$$ X(x,y,z)= C\cos y \pp{}{x} + B\sin x \pp{}{y} + [C\sin y + B \cos x]z\pp{}{z} $$
is $b$-Beltrami on the $b$-manifold $\T^2\times \mathbb{R}$ with a $\T^2$ as critical hypersurface. To obtain a vector field in a compact manifold, one can chose $\sin z$ instead of $z$ as defining function of the critical set (which is now defined in the quotient to $\T^3$) and hence work with $\sin z \pp{}{z}$ and $\frac{dz}{\sin z}$. We obtain a Beltrami field on $\T^3$ with two $\T^2$ as critical hypersurfaces. It is an easy computation to check that the $b$-vector field $X$ is non-vanishing as a section of $^b TM$ if and only if $|B| \ne |C|$.

\begin{rem}
For this $b$-manifold the corresponding original manifold can be thought as $M=\mathbb{R} \times \T^2$. When compactifying each of the cylindrical ends we obtain a manifold diffeomorphic to $\T^2 \times [0,1]$. When considering its double the resulting $b$-manifold is $\T^3$ with two $\T^2$ as critical hypersurfaces.
\end{rem}

As an example of $b$-contact structure, let us compute it in the simple case $C=0$ and $B > 0$ for ABC fields in $\T^3$, i.e. the defining function is $\sin z$. The one $b$-form $\alpha$ in this case is
$$ \alpha= g(X,\cdot)= B\sin x dy + B\cos x \frac{dz}{\sin z}. $$
It is clearly a $b$-contact structure since
$$ \alpha\wedge d\alpha= B^2 dx\wedge dy\wedge \frac{dz}{\sin z}, $$
and its Reeb vector field is $R= \frac{1}{B}\sin x \pp{}{y} + \frac{1}{B} \cos x \sin z \pp{}{z}$, a rescaling of the original Beltrami field.
\end{ex}

\begin{rem}
This correspondence holds true if we consider Beltrami fields on $b^m$-manifolds. The associated structure is then a $b^m$-contact structure.
\end{rem}

The Weinstein conjecture \cite{weinstein} on periodic orbits of Reeb flows claims that any Reeb vector field admits a periodic orbit on a compact manifold.
In \cite{MO} a plug-like construction is used to give a counterexample to the Weinstein conjecture on a $b$-contact manifold: the associated Reeb vector field does not have a smooth periodic orbit and the notion of singular periodic orbits is introduced.
\begin{coro} A Beltrami field on a $b$-manifold does not  necessarily have a smooth periodic orbit.
\end{coro}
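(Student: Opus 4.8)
The plan is to deduce this directly from the converse direction of the correspondence theorem proved above, feeding it the counterexample to the Weinstein conjecture constructed in \cite{MO}. First I would invoke that construction: it produces a compact three-dimensional $b$-contact manifold $(M,Z,\alpha)$ whose Reeb vector field $R$ has no smooth periodic orbit, the only periodic orbits being the singular ones supported on $Z$. Since the correspondence theorem is stated for $b$-manifolds of dimension three, I would first note that the $b$-contact form from \cite{MO} lives precisely in this dimension, so that the theorem applies verbatim.

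Next I would apply the converse statement of the theorem to $\alpha$: any nonzero rescaling $Y=hR$, with $h\in C^\infty(M)$ non-vanishing, is a rotational Beltrami field for a suitable $b$-metric $g$ and $b$-volume form $\mu$. Indeed the proof produces explicit forms for which $\iota_Y g=\alpha$ and $\iota_Y\mu=d\alpha$, so that $Y$ is Beltrami with constant proportionality factor. In particular one may simply take $h\equiv 1$, so that $Y=R$ itself is realized as a $b$-Beltrami field on $(M,Z)$ for these geometric structures.

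The only point to verify is that passing from $R$ to the Beltrami field $Y=hR$ does not create periodic orbits. This is immediate: since $h$ is non-vanishing, the integral curves of $hR$ are orientation-preserving reparametrizations of the integral curves of $R$, so $hR$ admits a smooth periodic orbit if and only if $R$ does. As $R$ has none, neither does $Y$. Hence $Y$ is a rotational Beltrami field on the $b$-manifold $(M,Z)$ with no smooth periodic orbit, which is exactly the assertion. There is no genuine obstacle here; all the difficulty is packaged into the correspondence theorem and into the \cite{MO} construction, and the corollary is precisely the observation that Beltrami dynamics inherits the failure of the Weinstein conjecture through the rescaling-invariance of periodic orbits.
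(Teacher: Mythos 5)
Your proposal is correct and follows exactly the route the paper intends (the paper leaves the proof implicit in the preceding paragraph): take the counterexample to the Weinstein conjecture from \cite{MO}, feed its Reeb field into the converse direction of the $b$-Beltrami/$b$-contact correspondence, and observe that rescaling by a non-vanishing function cannot create or destroy periodic orbits.
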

In \cite{MO} the authors conjecture a \emph{singular version of the Weinstein conjecture} claiming that the Reeb vector field of any compact $b$-contact manifold  possesses at least one periodic orbit which may be singular in the following sense:
\begin{defi}
Let $M$ be a manifold with hypersurface $Z$. A singular periodic orbit is either a periodic orbit in $M \backslash Z$ or an orbit $\gamma$ such that $\operatorname{lim}_{t\rightarrow \pm \infty} \gamma(t) \in Z$.
\end{defi}
One could obtain information about the stream lines of a $b$-Beltrami flow depending on the possible casuistics that this conjecture opens. In particular, this would allow to establish the existence of either a periodic orbit or an unbounded orbit that escapes (in both directions) through a cylindrical end. 
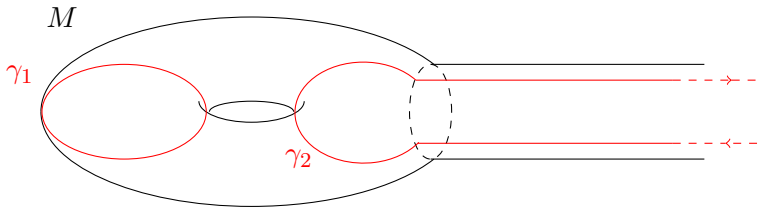
\begin{figure}[!h]
\begin{center}
\begin{tikzpicture}[scale=1.4]

\draw (-1.8,0.9,0) node[scale=1]{$M$};

\draw (-2,0,0) arc (-180:-30:2 and 0.9);
\draw (-2,0,0) arc (-180:-330:2 and 0.9);

\draw (0.4,0,0) arc (0:180:0.4 and 0.1);
\draw (-0.5,0.1,0) arc (-180:0:0.5 and 0.2);

\draw (1.7,0,0)[dashed] ellipse (0.2 and 0.45);

\draw (1.7,0.45,0)--(4.3,0.45,0);
\draw (1.7,-0.45,0)--(4.3,-0.45,0);

\draw [red] (-1.21,0,0) ellipse (0.78 and 0.45);

\draw [red] (1.56,0.3,0)--(4,0.3,0);
\draw [red] (1.58,-0.3,0)--(4,-0.3,0);
\draw [red] (1.56,0.3,0) arc (40:324:0.65 and 0.48);
\draw [red,dashed, ->] (4,0.3,0)--(4.56,0.3,0);
\draw [red,dashed, ->] (4.8,-0.3,0)--(4.5,-0.3,0);
\draw [red,dashed] (4.56,0.3,0)--(4.8,0.3,0);
\draw [red,dashed] (4,-0.3,0)--(4.5,-0.3,0);

\draw (0.45,-0.45,0)[red] node[scale=1]{$\gamma_2$};
\draw (-2.2,0.35,0)[red] node[scale=1]{$\gamma_1$};
\end{tikzpicture}
\end{center}
 \caption{Two possible singular periodic orbits}
 \end{figure}

\end{document}